\newcommand{\be}{\begin{equation}}
\newcommand{\en}{\end{equation}}
\newcommand{\bea}{\begin{eqnarray}}
\newcommand{\ena}{\end{eqnarray}}
\newcommand{\beano}{\begin{eqnarray*}}
\newcommand{\enano}{\end{eqnarray*}}
\newcommand{\bee}{\begin{enumerate}}
\newcommand{\ene}{\end{enumerate}}
\newcommand{\ad}{^{\mbox{\scriptsize $\dag$}}}
\newcommand{\mc}{\mathcal}
\newcommand{\mb}{\mathbb}
\newcommand{\A}{\mathfrak{A}}
\newcommand{\Ao}{\mathfrak{A}_0}
\newcommand{\Hil}{{\mc H}}
\newcommand{\F}{{\mathbb F}}
\def\L{{\mathcal L}}
\newcommand{\Lc}{{\mc L}}
\newcommand{\D}{{\mc D}}
\newcommand{\M}{{\mathfrak M}}
\newcommand{\BB}{{\mathfrak B}}
\newcommand{\up}{\upharpoonright}
\newtheorem{defn}{Definition}[section]
\newtheorem{thm}[defn]{Theorem}
\newtheorem{prop}[defn]{Proposition}
\newtheorem{lemma}[defn]{Lemma}
\newtheorem{example}[defn]{Example}
\newtheorem{rem}[defn]{Remark}
\def\x{\relax\ifmmode {\mbox{*}}\else*\fi}
\newcommand{\beex}{\begin{example}$\!\!${\bf }$\;$\rm }
\newcommand{\enex}{ \end{example}}
\newcommand{\berem}{\begin{rem}$\!\!${\bf }$\;$\rm }
\newcommand{\enrem}{ \end{rem}}
\newcommand{\bedefi}{\begin{defn}$\!\!${\bf }$\;$\rm }
\newcommand{\findefi}{\end{defn}}
\newcommand{\pa}{partial \mbox{*-algebra}}
\newcommand{\ha}{^{\ast}}
\newcommand{\ip}[2]{\left\langle {#1}\left|{#2}\right.\right\rangle}
\newcommand{\gl}{{\mathfrak L}}
\newcommand{\LDD}{\gl(\D,\D^\times)}
\newcommand{\LD}{\Lc^\dagger(\D)}
\newcommand{\LBDD}[1]{\mbox{${\gl}_{\textsf{B}}^{#1}(\D,\D^\times)$}}
\newcommand{\LBDDs}{\mbox{${\sf L}_{\textsf{B}}(\D,\D^\times)$}}
\newcommand{\LBDE}{\LBDD{}}
\newcommand{\LGD}{\gl^\dagger(\D)}
\newcommand{\itop}{\tau_{\rm ind}}
\def\H{{\mathcal H}}
\newcommand{\restr}[1]{_{\!\up{#1}}}
\newcommand{\m}{{\scriptscriptstyle\mathfrak M\;}}
\begin{document}
\title[Bounded elements of C*-inductive spaces]
{Bounded elements of C*-inductive locally convex spaces}

\author{Giorgia Bellomonte}
\author{Salvatore di Bella}
\author{Camillo Trapani}
\address{Dipartimento di Matematica e Informatica,
Universit\`a di Palermo, I-90123 Palermo, Italy}
\email{bellomonte@math.unipa.it}
\email{salvatore.dibella@math.unipa.it}
\email{camillo.trapani@unipa.it}

\subjclass[2010]{47L60, 47L40} \keywords{bounded elements, inductive limit of C*-algebras, partial *-algebras}

\begin{abstract} The notion of bounded element of C*-inductive locally convex spaces (or C*-inductive partial *-algebras) is introduced and discussed in two ways: the first one takes into account the inductive structure provided by certain families of C*-algebras; the second one is linked to natural order of these spaces. A particular attention is devoted to the relevant instance provided by the space of continuous linear maps acting in a rigged Hilbert space.
\end{abstract}

\maketitle

\section{Introduction}\label{sect_introd}
Some locally convex spaces exhibit an interesting feature: they contain a large number of C*-algebras that often contribute to their topological structure, in the sense that these spaces can be thought as {\em generalized}  inductive limits of C*-algebras. These objects were called {\em C*-inductive locally convex spaces} in \cite{betra2} and their structure was examined in detail, also taking in mind that they arise naturally when one considers the operators acting in the {\em joint topological limit} of an inductive family of Hilbert spaces as described in \cite{betra}. Indeed, a typical instance of this structure is obtained by considering the space $\LBDD{}$ of operators acting in the rigged Hilbert space canonically associated to an O*-algebra of unbounded operators acting on a dense domain $\D$ of Hilbert space $\H$. In \cite{betra2} a series of features of this structure were studied giving a particular attention to the order structure, positive linear functionals and representation theory. The space $\LBDD{}$ contains a subspace isomorphic to the *-algebra $\BB(\H)$ of bounded operators in $\H$ whose elements can be in natural way considered as the {\em bounded elements} of $\LBDD{}$.
The notion of bounded element of a locally convex *-algebra  $\A$ was first introduced by Allan \cite{allan} with the aim of developing a spectral theory for topological *-algebras: an element $x$ of the topological *-algebra  $\A[\tau]$ is {\em Allan bounded} if there exists $\lambda \neq 0$ such that the set $\{(\lambda^{-1}x)^n;\, n=1,2,\ldots\}$ is a bounded subset of $\A[\tau]$. This definition was suggested by the successful spectral analysis for closed operators in Hilbert space $\H$: a complex number $\lambda$ is in the resolvent set $\rho(T)$ of a closed operator $T$ if $T-\lambda I$ has an inverse in the *-algebra $\BB(\H)$ of bounded operators.

There are, however, several other possibilities for defining bounded elements. For instance, one may say that $x$ is bounded if $\pi(x)$ is a bounded operator, for every (continuous, in a certain sense) *-representation $\pi$ defined on a dense domain $\D_\pi$ of some Hilbert space $\H_\pi$.   This could be a reasonable definition in itself, provided that  $\A$ possesses sufficiently many *-representations in Hilbert space.

Moreover some attempts to extend this notion to the larger set-up of locally convex quasi *-algebras \cite{ctbound1, ctbound2, ct_ban, frag_ct_st} or locally convex partial *-algebras \cite{antratsc, att_2010, ant_be_ct} have been done. But in these cases, Allan's notion cannot be adopted, since powers of a given element $x$ need not be defined.

In the case of *-algebras, bounded elements in purely algebraic terms have been considered by Vidav \cite{vidav} and Schm\"udgen \cite{schm_weyl} with respect to some (positive) wedge.

The aim of this paper is to extend the notion of bounded element to the case of C*-inductive locally convex spaces $\A$ with defining family of C*-algebras $\{\BB_\alpha; \alpha \in \F\}$ ($\F$ is an index set directed upward). There are also in this case several possibilities: the first one consists in taking elements that have {\em representatives} in every C*-algebra $\BB_\alpha$ of the family whose norms are uniformly bounded; the second one consists into taking into account the order structure of $\A$, in the same spirit of the quoted papers of Vidav and Schm\"udgen.

The paper is organized as follows. After some preliminaries (Section \ref{sect_preliminaries}), we study, in Section \ref{sect_oprhs}, how {\em bounded elements} of $\LBDD{}$ can be derived from its C*-inductive structure and from its order structure. We show that these two notions are equivalent and that an element $X$ is bounded if and only if $X$ maps $\D$ into $\H$ and $\overline{X}\in \BB(\H)$.
Finally, in Section \ref{sect_abstract}, we consider the same problem for abstract C*-inductive locally convex spaces and give conditions for some of the characterizations proved for $\LBDD{}$ maintain their validity.
Some of these results are then specialized to the case where $\A$ is a C*-inductive locally convex partial *-algebra.

\section{Notations and preliminaries}\label{sect_preliminaries}

For general aspects of the theory of \pa s and of their representations, we refer to the monograph
\cite{ait_book}.
For the convenience of the reader, however, we   repeat here the essential definitions.

A \pa\ $\A$ is a complex vector space with conjugate linear
involution  $\ha $ and a distributive partial multiplication
$\cdot$, defined on a subset $\Gamma \subset \A \times \A$,
satisfying the property that $(x,y)\in \Gamma$ if, and only if,
$(y\ha ,x\ha )\in   \Gamma$ and $(x\cdot y)\ha = y\ha \cdot x\ha $.
From now on, we will write simply $xy$ instead of $x\cdot y$ whenever
$(x,y)\in \Gamma$. For every $y \in \A$, the set of left (resp.
right) multipliers of $y$ is denoted by $L(y)$ (resp. $R(y)$), i.e.,
$L(y)=\{x\in \A:\, (x,y)\in \Gamma\}$, (resp. $R(y)=\{x\in \A:\, (y,x)\in \Gamma\}$). We denote by $L\A$ (resp.
$R\A$)  the space of universal left (resp. right) multipliers of
$\A$.
In general, a \pa\ is not associative.

The {\em unit} of  partial *-algebra $\A$, if any, is an element $e\in \A$ such that $e=e\ha$, $e\in R\A\cap L\A$ and $xe=ex=x$, for every $x\in \A$.

Let $\H$ be a complex Hilbert space and $\D$ a dense subspace of $\H$.
 We denote by $ \L\ad(\D,\H) $
the set of all (closable) linear operators $X$ such that $ {D}(X) = {\D},\; {D}(X\x) \supseteq {\D}.$ The map $X\to X^\dagger=X^*\restr\D$ defines an involution on $\L\ad(\D,\H )$, which can be made into a  \pa\ with respect to the {\em weak} multiplication \cite{ait_book}; however, this fact will not be used in this paper.

Let $\Lc^\dagger(\D)$ be the subspace of $\Lc^\dagger(\D,\Hil)$ consisting of all its elements  which leave, together with their adjoints, the domain $\D$ invariant. Then $\Lc^\dagger(\D)$ is a *-algebra with
respect to the usual operations. A *-subalgebra $\M$ of $\Lc^\dagger(\D)$ is called an O*-algebra.

Let $\M $ be an O*-algebra. The {\em graph topology} $t_\m $ on $\D$ is the locally convex topology
defined by the family
$\{\|\cdot\|_A\}_{A\in \M}$, where
$$ \|\xi\|_A= \sqrt{\|\xi\|^2 + \|A\xi\|^2}=\|(I+A^*\overline{A})^{1/2}\xi\|, \quad \xi \in \D.$$
For $A=0$, the null operator of $\LD$, $\|\cdot\|_0$ is exactly the norm of $\H$, thus we will omit the $0$ in the notation of the norm.

The topology
$t_{\m}$ is finer than the norm topology, unless ${\M}$ does  consist of
bounded operators only.

If $\M$ is an O*-algebra, we write $A\preceq B$ if $\|A\xi\|\leq \|B\xi\|$, for every $\xi \in \D$. Then, $\M$ is directed upward with respect to this order relation.

If $A \in \M$, we denote by $\H_A$ the Hilbert space obtained by endowing $D(\overline{A})$ with the graph norm $\|\cdot\|_A$.

If $A,B\in \M$ and $A\preceq B$, then $U_{BA}= (I+B^*\overline{B})^{-1/2}(I+A^*\overline{A})^{1/2}$ is a contractive map of $\H_A$ into $\H_B$; i.e.,
$\|U_{BA}\xi \|_B\leq \|\xi\|_A$, for every $\xi \in \H_A$.

If the
locally convex space $\D[t_\m ]$ is complete, then $\M $ is said to be {\em closed}.

If $\M=\Lc^\dagger(\D)$ then the corresponding graph topology is denoted by $t_\dagger$ instead of $t_{\Lc^\dagger(\D)}$.

\medskip
As is known, a locally convex topology $t$ on $\D$ finer than the topology induced by the Hilbert norm defines, in standard fashion,
a {\em rigged Hilbert space} (RHS)
$$
\D[t] \hookrightarrow  \H \hookrightarrow\D^\times[t^\times],
$$
where $\D^\times$  is the vector space of all continuous conjugate linear functionals on
$\D[t]$, i.e., the conjugate dual of $\D[t]$,
endowed with the {\em strong dual topology} $t^\times= \beta(\D^\times,\D)$ and $\hookrightarrow $ denotes a continuous embedding with dense range. The
Hilbert space $\H$ is identified (by considering the form which puts $\D$ and $\D^\times$ as an extension of the inner product of $\D$) with a dense subspace of
$\D^\times[t^\times]$.

\medskip Let $\LDD$ denote the vector space of all continuous linear maps from $\D[t]$ into  $\D^\times[t^\times]$. In $\LDD$ an involution $X \mapsto X\ad$ can be introduced by the equality
$$ \ip{X\xi}{\eta}= \overline{\ip{X\ad \eta}{ \xi}}, \quad \forall \xi, \eta \in \D.$$  Hence $\LDD$ is a *-invariant vector space.

  To every $X \in \LDD$ there corresponds a separately continuous sesquilinear form $\theta_X$ on $\D \times \D$ defined by
$$
\theta_X (\xi,\eta)= \ip{X\xi}{\eta}, \quad \xi ,\, \eta \in \D.
$$
 The vector space of all {\em jointly} continuous sesquilinear forms on $\D \times \D$ will be
 denoted with ${\sf B}(\D,\D)$.
We denote by $\LBDD{}$ the subspace of  all $X \in \LDD$ such that $\theta_X \in {\sf B}(\D,\D)$ and by $\LGD$ the *-algebra consisting of all operators of $\Lc^\dagger(\D)$, which together with their adjoints are continuous from $\D[t]$ into $\D[t]$. If $t = t_\dagger$, then $\LGD=\Lc^\dagger(\D)$.
We will refer to the rigged Hilbert space defined by endowing $\D$ with the topology $t_\dag$ as to the {\em canonical} rigged Hilbert space defined by $\LD$ on $\D$. In this case $(\LBDD{}, \LD)$ is a quasi *-algebra \cite{ait_book}.

The spaces $\LDD$ and $\LBDD{}$ have been studied at length by several authors (see, e.g. \cite{kurst1,kurst2,kurst3, tratschi}) and several pathologies concerning their multiplicative structure have been considered (see also \cite{ait_book, jpact_book} and references therein). Recently some spectral properties of operators of these classes have also been studied \cite{bedbct}.

\section{Bounded elements of $\LBDD{}$} \label{sect_oprhs}
The inductive structure of $\LBDD{}$, with $\D$ endowed with the graph topology $t_\dagger$, has been discussed in \cite[Section 5]{betra2}. To keep the paper reasonably self-contained, we sum the main features up. 

By the definition itself, $X\in \LBDD{}$  if, and only if, there exists
$\gamma_X>0$ and $A\in \LD$ such that
\begin{equation}\label{beta2} |\theta_X(\xi, \eta)| = |\ip{X\xi}{\eta}| \leq \gamma_X \|\xi\|_A\, \|\eta\|_A, \quad  \forall \xi, \eta \in \D.
\end{equation}

Conversely, if $\theta \in \textsf{B}(\D,\D)$, there exists a unique $X \in \LBDD{}$ such that $\theta= \theta_X$.

Thus, the map $${\mathbb I}: X \in \LBDD{}\mapsto \theta_X \in \textsf{B}(\D,\D)$$ is an isomorphism of vector spaces and ${\mathbb I}(\theta^*)=X^\dag$, where $\theta^*(\xi,\eta)=\overline{ \theta(\eta,\xi)}$, for every $\xi,\eta\in \D$.

We denote by $\textsf{B}^A(\D,\D)$  the subspace of
$\textsf{B}(\D,\D)$ consisting of all $\theta \in \textsf{B}(\D,\D)$
such that \eqref{beta2} holds for fixed $A \in \LD$.

If $\theta \in \textsf{B}^A(\D,\D)$,
it extends to a bounded sesquilinear form on $\H_A\times \H_A$ (we use the same symbol for this
extension). Hence, there exists a unique operator $X_A^\theta\in {\BB}(\Hil_A)$ such that
$$ \theta(\xi, \eta)=\ip{X_A^\theta\xi}{\eta}_A, \quad \forall \xi, \eta \in \Hil_A.$$
On the other hand, if $X_A \in \BB(\Hil_A)$, then the sesquilinear form
$\theta_{X_A}$ defined by
$$ \theta_{X_A}(\xi, \eta)= \ip{X_A\xi}{ \eta}_A, \quad \xi, \eta \in \D, $$
is an element of $\textsf{B}^A(\D,\D)$  and the map
$$ \Phi_A: X_A \in \BB(\Hil_A) \to \theta_{X_A} \in \textsf{B}^A(\D,\D)$$
is a *-isomorphism  of vector spaces with involution.

If $B\succeq A$, then, for $\xi, \eta \in \D$,
$$| \theta_{X_A}(\xi, \eta)|=|\ip{X_A\xi}{ \eta}_A|\leq \|X_A\|_{A,A} \|\xi\|_A\, \|\eta\|_A\leq \|X_A\|_{A,A} \|\xi\|_B\, \|\eta\|_B,$$
where $\|\cdot\|_{A,A}$ denotes the operator norm in $\BB(\Hil_A)$.
Hence, there exists a unique $X_B \in \BB(\H_B)$ such that
$$ \ip{X_A\xi}{ \eta}_A = \ip{X_B\xi}{ \eta}_B, \quad \forall \xi, \eta \in \D.$$
So it is natural to define
$$J_{BA}(X_A)=X_B, \quad \forall X_A \in \BB(\H_A).$$
It is easily seen that $J_{BA}= \Phi_B^{-1}  \Phi_A$.

The space $\LBDD{A}:= {\mathbb I}^{-1}\textsf{B}^A(\D,\D)$ is
a Banach space, with norm
$$ \|X\|^A := \sup_{\|\xi\|_A, \|\eta\|_A\leq 1}|\theta_X(\xi, \eta)|$$
and $\LBDE$ can be endowed with the inductive topology $\itop$ defined by the family
of subspaces $\{\LBDD{A}; A \in \LD\}$ as in \cite[Section 1.2. III]{schmu}.

In conclusion,
$$ X_A \in \BB(\H_A) \leftrightarrow \theta_{X_A} \in \textsf{B}^A(\D,\D)\leftrightarrow X \in \LBDD{A}$$
are isometric *-isomorphisms of Banach spaces.

Hence, to every $X \in \LBDD{}$ one can associate the net $\{X_B; B \in \LD; B\succeq A\}$ of its representatives in each of the spaces $\H_B$.

\bedefi \label{def_bounded} We say that $X \in \LBDD{}$ is a {\em bounded element} of $\LBDD{}$ if
$X$ has a representative $X_A$ in every $\BB(\mathcal{H}_A)$ and $$\|X\|_b :=\sup_{A\in\mathcal{L}^\dag(\mathcal{D})}\| X_A\| _{A,A}<+\infty.$$
\findefi

The space $\LBDD{}_b$ of all bounded elements of $\LBDD{}$ is a Banach space with norm $\|\cdot\|_b$.

\begin{prop} $\mbox{$\LBDD{}_b$}$ is *-isomorphic (as Banach space) to a C*-algebra of operators.
\end{prop}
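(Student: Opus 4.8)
The plan is to single out the null operator $0\in\LD$, which is the minimum of $(\LD,\preceq)$, and to prove that the bounded elements of $\LBDD{}$ are precisely those admitting a representative already in $\BB(\H_0)=\BB(\H)$; on this subspace $\|\cdot\|_b$ will coincide with the Banach norm $\|\cdot\|^0$, and the isometric $*$-isomorphisms recalled above, read at $A=0$, will identify it with $\BB(\H)$.

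First I would note that $0\preceq A$ for every $A\in\LD$, so that $\|\xi\|=\|\xi\|_0\le\|\xi\|_A$ for all $\xi\in\D$. Hence, if $X\in\LBDD{}$ has a representative $X_0\in\BB(\H_0)=\BB(\H)$, then for each $A$ the representative $X_A=J_{A0}(X_0)$ exists and, combining $\ip{X_A\xi}{\eta}_A=\ip{X_0\xi}{\eta}$ (for $\xi,\eta\in\D$) with
$$|\ip{X_0\xi}{\eta}|\le\|X_0\|_{0,0}\,\|\xi\|\,\|\eta\|\le\|X_0\|_{0,0}\,\|\xi\|_A\,\|\eta\|_A,$$
one obtains $\|X_A\|_{A,A}\le\|X_0\|_{0,0}$ for every $A$. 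Thus the net of representatives is automatically uniformly bounded and
$$\|X\|_b=\sup_{A\in\LD}\|X_A\|_{A,A}=\|X_0\|_{0,0}=\|X\|^0.$$
Conversely, a bounded element has, by definition, a representative in every $\BB(\H_A)$, in particular the one $X_0\in\BB(\H)$. Therefore $X$ is a bounded element if and only if $X\in\LBDD{0}={\mathbb I}^{-1}\textsf{B}^0(\D,\D)$, and on $\LBDD{0}$ the norms $\|\cdot\|_b$ and $\|\cdot\|^0$ agree.

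It then remains to read the chain of isometric $*$-isomorphisms
$$X_A\in\BB(\H_A)\leftrightarrow\theta_{X_A}\in\textsf{B}^A(\D,\D)\leftrightarrow X\in\LBDD{A}$$
at $A=0$. This yields an isometric, involution-preserving linear bijection $\Psi=\Phi_0^{-1}\circ{\mathbb I}$ of the Banach space $(\LBDD{}_b,\|\cdot\|_b)=(\LBDD{0},\|\cdot\|^0)$ onto $\BB(\H_0)=\BB(\H)$, sending $X\ad$ to $(\Psi X)^*$. Surjectivity is immediate: given $T\in\BB(\H)$, the form $\theta_T(\xi,\eta)=\ip{T\xi}{\eta}$ lies in $\textsf{B}^0(\D,\D)$ (it satisfies \eqref{beta2} with $A=0$, the Hilbert seminorm being $t_\dagger$-continuous), so ${\mathbb I}^{-1}\theta_T$ is a bounded element whose representative at $A=0$ is $T$. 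Hence $\LBDD{}_b$ is $*$-isomorphic, as a Banach space, to the C*-algebra $\BB(\H)$.

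The only substantive point is the equality $\|X\|_b=\|X_0\|_{0,0}$, namely that the supremum defining $\|\cdot\|_b$ is governed by (indeed attained at) the minimal index $A=0$; once the monotonicity estimate $\|X_A\|_{A,A}\le\|X_0\|_{0,0}$ and the remark that a single representative $X_0$ forces all the others are in place, the rest is a direct transcription of the correspondences already established.
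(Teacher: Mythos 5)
Your proof is correct, but it takes a genuinely different route from the paper's. The paper sends $X$ to the entire net $\{X_A\}_{A\in\LD}$ of its representatives, viewed as a diagonal operator on the Hilbert direct sum $\mathcal{H}_\oplus=\bigoplus_{A\in\LD}\H_A$, and checks that this assignment is an isometry of $(\LBDD{}_b,\|\cdot\|_b)$ into $\BB(\mathcal{H}_\oplus)$. You instead exploit the fact that the index set $(\LD,\preceq)$ has a least element, the null operator $0$, with $\H_0=\H$: a single representative $X_0\in\BB(\H)$ automatically generates all the others via $J_{A0}$ with $\|X_A\|_{A,A}\le\|X_0\|_{0,0}$, so that $\LBDD{}_b=\LBDD{0}$ as sets and the supremum defining $\|\cdot\|_b$ is attained at $A=0$; reading the chain of isometric *-isomorphisms at $A=0$ then identifies $\LBDD{}_b$ with $\BB(\H)$ itself. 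Both arguments are sound, and yours is sharper and more concrete here: it names the C*-algebra explicitly as $\BB(\H)$, consistently with Theorem \ref{prop_35} (which characterizes bounded elements by $\overline{X}\in\BB(\H)$), and it exhibits a *-isomorphism \emph{onto} a C*-algebra, whereas the image of the paper's embedding $\tau$ is a priori only a norm-closed, *-invariant subspace of $\BB(\mathcal{H}_\oplus)$. What the paper's construction buys in exchange is independence from the existence of a minimal index: it is the pattern that survives in the abstract C*-inductive setting of Section \ref{sect_abstract}, where the directed set $\F$ need not have a least element.
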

\begin{proof}Let $\mathcal{H}_\oplus$ denote the Hilbert space direct sum of the ${\mathcal{H}_A, A\in \mathcal{L}^\dag(\mathcal{D})}$; i.e.,
\begin{align*}
\mathcal{H}_\oplus &:= \bigoplus _{A\in\mathcal{L}^\dag(\mathcal{D})} \mathcal{H}_A \\&= \left\{\xi_\oplus =(\xi_A) ; \xi_A\in\mathcal{H}_A, \forall A \in\mathcal{L}^\dag(\mathcal{D})\mbox{ and }\sum_A \| \xi_A\| _A ^2 <+\infty\right\}.
\end{align*}
 If $\{X_A\}_{A\in\mathcal{L}^\dag(\mathcal{D})}$ is a net of operators $X_A\in\BB(\mathcal{H}_A)$, $A\in\mathcal{L}^\dag(\mathcal{D})$, we define $X_\oplus \xi_\oplus = \{X_A \xi_A\}$ provided that $\sum_A \| X_A \xi_A \|^2<+\infty$, $\xi_A\in \H_A$.

The operator $X_\oplus = \{X_A\}$ is bounded if and only if $\sup_A \|X_A\| _{A,A}<+\infty$. The space constructed in this way is $\prod_A \BB(\mathcal{H}_A) = \BB(\mathcal{H}_\oplus)$.
To every $X \in \LBDD{}_b $ we can associate the net $\{X_A\}$ which we have defined above. Clearly, $\{X_A\} \in \BB(\mathcal{H}_\oplus)$. It is easily seen that the map
$$\tau: X \in \LBDD{}_b \mapsto \{X_A\} \in \BB(\mathcal{H}_\oplus)$$
is isometric. Thus, the statement is proved.
\end{proof}

\berem
An element $X\in \LBDD{}$ having a representative $X_A$ for every $A\in\mathcal{L}^\dag(\mathcal{D})$ need not be bounded in the sense of Definition \ref{def_bounded}. The spaces $\{\H_A; A\in \LD\}$, together with their conjugate duals make $D^\times$ into an indexed {\sc PIP}-space   \cite[Ch.2]{jpact_book}. In that language, operators having representatives in every $\H_A$ are called totally regular operators. For more details on their behavior see \cite[Sect. 3.3.3]{jpact_book} where also a C*-agebra corresponding to our bounded elements has been studied.
\enrem

Our next goal is to characterize bounded elements of $\LBDD{}$ in several different ways. For doing this, we need to consider the natural order structure of $\LBDD{}$.

We say that $X\in\LBDD{}$ is {\em positive}, and write $X\geq 0$, if  $\ip{X\xi}{\xi}\geq 0$, for every $\xi\in\mathcal{D}$.

It is easy to see that,
if $X$ is positive, then it is {\em symmetric}; i.e., $X=X^\dag$.

\begin{prop}
The following conditions are equivalent.
\begin{itemize}
\item[(i)] $X\geq 0$.
\item[(ii)] There exists $A\in \mathcal{L}^\dag(\mathcal{D})$ such that $X_B\geq 0,\quad \forall B\succeq A$.
\end{itemize}
\end{prop}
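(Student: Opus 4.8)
The plan is to prove the two implications separately, relying throughout on the identification
$$\theta_X(\xi,\eta)=\ip{X_B\xi}{\eta}_B, \qquad \xi,\eta\in\D,$$
which holds for each representative $X_B\in\BB(\H_B)$, together with the elementary relation $\theta_X(\xi,\xi)=\ip{X\xi}{\xi}$ and the fact that $\D$ is dense in every $\H_B$ in the graph norm $\|\cdot\|_B$ (equivalently, that $\D$ is a core for $\overline{B}$, which is immediate since $\overline{B}$ is the closure of $B\restr\D$).

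For the implication (ii) $\Rightarrow$ (i) I would simply specialize to $B=A$, which is admissible because $A\succeq A$. Then, for every $\xi\in\D\subseteq\H_A$,
$$\ip{X\xi}{\xi}=\theta_X(\xi,\xi)=\ip{X_A\xi}{\xi}_A\geq 0,$$
the last inequality being the positivity of $X_A$ on $\H_A$. Hence $X\geq 0$. This direction uses nothing beyond the definition of the representative and the reflexivity of $\succeq$.

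For the converse (i) $\Rightarrow$ (ii) I would first invoke \eqref{beta2} to produce $A\in\LD$ and $\gamma_X>0$ with $|\theta_X(\xi,\eta)|\leq\gamma_X\|\xi\|_A\|\eta\|_A$; this guarantees that $X$ has a representative $X_B\in\BB(\H_B)$ for every $B\succeq A$. Fix such a $B$. For $\xi\in\D$ one has
$$\ip{X_B\xi}{\xi}_B=\theta_X(\xi,\xi)=\ip{X\xi}{\xi}\geq 0,$$
using hypothesis (i). It then remains to upgrade this from the dense subspace $\D$ to all of $\H_B$: given $\eta\in\H_B$, choose $\xi_n\in\D$ with $\|\xi_n-\eta\|_B\to 0$; since $X_B$ is bounded on $\H_B$ and the inner product $\ip{\cdot}{\cdot}_B$ is jointly continuous, $\ip{X_B\xi_n}{\xi_n}_B\to\ip{X_B\eta}{\eta}_B$, so the limit is $\geq 0$. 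Thus $X_B\geq 0$ for every $B\succeq A$, which is precisely the assertion (ii).

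Both implications are short; the only genuine point requiring care is the last one, namely the passage from positivity of $\ip{X_B\,\cdot\,}{\cdot}_B$ on $\D$ to positivity on the whole of $\H_B$. I expect this to be the main (if modest) obstacle, and it rests squarely on the density of $\D$ in $\H_B$ for the graph norm together with the boundedness of the representative $X_B$; I would state this density explicitly as the one nontrivial ingredient of the argument.
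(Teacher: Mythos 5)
Your proof is correct and follows essentially the same route as the paper: both directions rest on the identity $\ip{X_B\xi}{\xi}_B=\ip{X\xi}{\xi}$ for $\xi\in\D$, with (i)$\Rightarrow$(ii) obtained by extending positivity from $\D$ to $\H_B$ by density and boundedness of $X_B$. The only difference is that you spell out the approximation argument that the paper compresses into ``since $\D$ is dense in $\H_B$,'' which is a harmless (and slightly more careful) elaboration.
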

\begin{proof}
(i)$\Rightarrow$(ii):
Since $X\in\LBDD{}$, there exists $A\in\mathcal{L}^\dag(\mathcal{D})$ and $\gamma > 0$ such that
\begin{equation*}
|\ip{X\xi}{\eta}|\leq\gamma \|\xi\| _B \|\eta\| _B, \quad B\succeq A.
\end{equation*}
If $X\geq 0$, then, for every $\xi \in \D$,
 $$\ip{X_B\xi}{\xi}_B=\ip{X\xi}{\xi}\geq 0, \quad \forall B\succeq A.$$
Since $\D$ is dense in $\mathcal{H}_B$, we have
$\ip{X_B\xi}{\xi}_B\geq 0, \; \forall \xi\in\mathcal{H}_B$.\\
(ii)$\Rightarrow$(i):
Let $X_B\geq 0$ for every $B\succeq A$. Then, for every $\xi \in \D$, $\ip{X\xi}{\xi}=\ip{X_B\xi}{\xi}_B\geq 0$.
\end{proof}

\begin{thm}\label{prop_35}
Let $X\in\LBDD{}$. The following statements are equivalent.
\begin{itemize}
\item[(i)]$X:\D\to \H$ and $\overline{X}\in\mathcal{B}(\H)$.
\item[(ii)] $X \in \LBDD{}_b$.
\item[(iii)] There exists $\lambda>0$ such that
\begin{equation*}
-\lambda I\leq \Re(X)\leq \lambda I,\quad -\lambda I\leq \Im(X)\leq \lambda I
\end{equation*}
where $\Re(X)= \frac{X+X^\dag}{2}$ and  $\Im(X)= \frac{X-X^\dag}{2i}$.
\end{itemize}
\end{thm}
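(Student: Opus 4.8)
The plan is to prove the theorem by establishing the cycle of implications (i) $\Rightarrow$ (ii) $\Rightarrow$ (iii) $\Rightarrow$ (i), exploiting the isometric $*$-isomorphisms $X \leftrightarrow \{X_A\}$ between $\LBDD{A}$ and $\BB(\H_A)$ that were set up before Definition \ref{def_bounded}, together with the positivity characterization of the previous proposition.

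For the implication (i) $\Rightarrow$ (ii), I would start from the hypothesis that $X$ maps $\D$ into $\H$ with bounded closure $\overline{X} \in \BB(\H)$. Then for all $\xi, \eta \in \D$ one has $|\ip{X\xi}{\eta}| = |\ip{\overline{X}\xi}{\eta}| \leq \|\overline{X}\| \, \|\xi\| \, \|\eta\|$. Since $\|\xi\| \leq \|\xi\|_A$ for every $A \in \LD$ (the graph norm dominates the Hilbert norm), this gives $|\theta_X(\xi,\eta)| \leq \|\overline{X}\| \, \|\xi\|_A \, \|\eta\|_A$ for \emph{every} $A$ simultaneously. Hence $X$ has a representative $X_A \in \BB(\H_A)$ for every $A$, and the computation shows $\|X_A\|_{A,A} \leq \|\overline{X}\|$ uniformly, so $\|X\|_b \leq \|\overline{X}\| < +\infty$; thus $X \in \LBDD{}_b$.

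The implication (ii) $\Rightarrow$ (iii) is where the order structure enters. If $X \in \LBDD{}_b$ with $\|X\|_b = \lambda_0$, then each representative of $\Re(X)$ and $\Im(X)$ is a selfadjoint element of $\BB(\H_A)$ of norm at most $\lambda_0$ (using that the map $X \mapsto \{X_A\}$ respects the involution and that $\Re, \Im$ are built from $X, X^\dag$). For a selfadjoint bounded operator $S_A$ on $\H_A$ with $\|S_A\|_{A,A} \leq \lambda_0$ one has $-\lambda_0 I \leq S_A \leq \lambda_0 I$ in $\BB(\H_A)$. Taking $\lambda > \lambda_0$, the operators $\lambda I \pm \Re(X)$ and $\lambda I \pm \Im(X)$ therefore have positive representatives $\geq 0$ for \emph{every} $B \succeq A$ (indeed for all $A$), so by the preceding proposition these four elements are positive in $\LBDD{}$, which is exactly condition (iii). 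The main subtlety I expect here is bookkeeping the consistency of the representatives across the net: one must be sure that the inequality $-\lambda I \leq \Re(X)$ read as an inequality in $\LBDD{}$ corresponds, via the proposition, to positivity of the representatives $(\lambda I + \Re(X))_B$ for large $B$, and that uniform boundedness of $\|X_A\|_{A,A}$ does deliver this for a single threshold $A$.

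The implication (iii) $\Rightarrow$ (i) is the one I anticipate being the genuine obstacle, since it must produce the strong conclusion that $X$ actually maps $\D$ into $\H$ (not merely into $\D^\times$) and that its closure is bounded on all of $\H$. From (iii) and the previous proposition, both $\lambda I \pm \Re(X)$ are positive, forcing $|\ip{\Re(X)\xi}{\xi}| \leq \lambda \|\xi\|^2$ for $\xi \in \D$, and likewise for $\Im(X)$; polarization then yields $|\ip{X\xi}{\eta}| \leq C\lambda \|\xi\| \, \|\eta\|$ for all $\xi,\eta \in \D$ with the Hilbert norm on the right. This bound is the key: it says $\theta_X$ is bounded with respect to $\|\cdot\| = \|\cdot\|_0$, so $X$ has a representative in $\BB(\H_0) = \BB(\H)$, i.e.\ the form $\theta_X$ extends to a bounded form on $\H \times \H$. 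By Riesz representation there is $T \in \BB(\H)$ with $\ip{X\xi}{\eta} = \ip{T\xi}{\eta}$ for all $\xi,\eta \in \D$; since $\D$ is dense this identifies $X\xi = T\xi \in \H$ for $\xi \in \D$, so $X$ maps $\D$ into $\H$ and $\overline{X} = T \in \BB(\H)$. The delicate point to verify carefully is the passage from the separate inequalities on $\Re(X)$ and $\Im(X)$ to a single Hilbert-norm bound on $\theta_X$ via polarization, and the justification that a Hilbert-norm-bounded element of $\LBDD{}$ is precisely one whose closure lies in $\BB(\H)$.
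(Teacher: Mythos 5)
Your implications (i)$\Rightarrow$(ii) and (iii)$\Rightarrow$(i) are correct and essentially the paper's own arguments (for (iii)$\Rightarrow$(i) the paper applies the Cauchy--Schwarz inequality to the positive form associated with $\frac{X+\lambda I}{2\lambda}$ instead of polarizing the hermitian forms $\theta_{\Re(X)}$ and $\theta_{\Im(X)}$, but the two devices are interchangeable). The genuine gap is in your direct argument for (ii)$\Rightarrow$(iii), and it sits exactly at the ``subtlety'' you flagged without resolving. You pass from $\|\Re(X)_B\|_{B,B}\le\lambda_0$ to ``$\lambda I\pm\Re(X)$ has a positive representative in $\BB(\H_B)$ for every $B$''. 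But the representative of $\lambda I\pm\Re(X)$ in $\BB(\H_B)$ is $\lambda I_B\pm\Re(X)_B$, where $I_B$ is the representative of the element $I\in\LBDD{}$, determined by $\ip{I_B\xi}{\eta}_B=\ip{\xi}{\eta}$; hence $I_B=(I+B^*\overline{B})^{-1}$, a positive \emph{contraction} on $\H_B$ which is not the unit $e_B$ of the C*-algebra $\BB(\H_B)$ and is not boundedly invertible when $B$ is unbounded. The norm bound only yields $-\lambda_0 e_B\le \Re(X)_B\le\lambda_0 e_B$, and since $I_B\le e_B$ the inequality $\mp\Re(X)_B\le\lambda I_B$ that you actually need is strictly stronger; no choice of $\lambda$ extracts it from the operator norm in $\BB(\H_B)$ alone (the unit $e_B$ itself has norm $1$, yet $e_B\le\lambda I_B$ fails for every $\lambda$ when $B$ is unbounded). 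So the claim that ``uniform boundedness of $\|X_A\|_{A,A}$ does deliver this'' is precisely the step that breaks down.

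The repair is cheap and is what the paper does: only the representative at $A=0$ is relevant, because $I_0$ \emph{is} the identity of $\H_0=\H$. From (ii) one gets $|\ip{X\xi}{\eta}|\le\|X_0\|\,\|\xi\|\,\|\eta\|$, whence by Riesz's lemma $X:\D\to\H$ and $\overline{X}\in\BB(\H)$ --- that is, (ii)$\Rightarrow$(i) --- and then (iii) follows from the elementary estimate $|\ip{\Re(X)\xi}{\xi}|\le\|\overline{X}\|\,\|\xi\|^2=\|\overline{X}\|\,\ip{I\xi}{\xi}$ valid in $\BB(\H)$. The paper organizes the proof as (i)$\Leftrightarrow$(ii) and (i)$\Leftrightarrow$(iii) for exactly this reason; your cycle is sound once you route (ii)$\Rightarrow$(iii) through (i) rather than through the representatives in the spaces $\H_B$ with $B\ne 0$.
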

\begin{proof} (i)$\Rightarrow$(ii): If $X:\mathcal{D}\rightarrow\mathcal{H}$ and $X$ is bounded, then, for every $A\in \mathcal{L}^\dag(\mathcal{D})$,
\begin{equation} \label{diseq2}
 |\ip{X\xi}{\eta}|\leq\| \overline{X} \| \| \xi\| \| \eta\| \leq \| \overline{X} \| \| \xi\| _A \| \eta\| _A .\quad
\end{equation}
This means that $X$ has a bounded representative $X_A$ in every $\mathcal{B}(\H_A)$. By (\ref{diseq2}), $\| X_A\| _{A,A}\leq \|\overline{X}\|$, for every $A\in \mathcal{L}^\dag(\mathcal{D}) $, so $\sup_{A\in\mathcal{L}^\dag(\mathcal{D})} \| X_A\| _{A,A}  < +\infty$.

(ii)$\Rightarrow$(i) Let $X \in \LBDD{}_b$. Then, for every $A \in \mathcal{L}^\dag (\mathcal{D})$
\begin{equation*}
|\ip{X\xi}{\eta}|\leq\| X_A \|_{A,A}\, \| \xi\| _A\,\| \eta\| _A, \quad \forall \xi, \eta \in \D.
\end{equation*}
In particular, for $A=0$,\begin{equation} \label{diseq} |\ip{X\xi}{\eta}|\leq\| X_0 \| \| \xi\| \| \eta\|, \quad \forall \xi, \eta \in \D.
\end{equation}

By (\ref{diseq}), for every $\xi\in \D$, $F(\eta)$ $=$ $\ip{X\xi}{\eta}$ is a bounded conjugate linear functional on $\mathcal{D}$, so by Riesz's lemma $X\xi\in\mathcal{H}$. It is, finally easily seen that $\overline{X}\in\mathcal{B}(\H)$.

(iii)$\Rightarrow$(i)
Suppose first that $X=X^\dag$. Note that the operator $X$ satisfies the following: $0\leq\frac{X+\lambda I}{2\lambda}\leq I$; so $\frac{X+\lambda I}{2\lambda}$ is a positive operator and $\ip{\frac{X+\lambda I}{2\lambda}\xi}{\xi}\leq \ip{\xi}{\xi}, \ \forall \xi\in \mathcal{D}$; this implies that
\begin{equation}\label{diseq3}
\left|\ip{\frac{X+\lambda I}{2\lambda}\xi}{ \eta}\right|\leq \|\xi\|\ \|\eta\|, \quad \forall \xi , \eta \in \mathcal{D}
\end{equation}
and by Riesz's lemma there exists $\zeta\in\mathcal{H}$ such that
\begin{equation}
\ip{\frac{X+\lambda I}{2\lambda}\xi}{\eta} = \ip{\zeta}{\eta}, \quad \forall \xi , \eta \in \mathcal{D}
\end{equation}
and then $\frac{X+\lambda I}{2\lambda}\xi\in\mathcal{H}$. This implies that $X\xi\in\mathcal{H}$ too.
Moreover, $X$ has a representative for every $A\in\mathcal{L}^\dag(\mathcal{D})$. Indeed,
\begin{equation*}
|\ip{X\xi}{\eta}|\leq \gamma\|\xi\|\|\eta\|\leq\gamma\|\xi\| _A \|\eta\| _A \quad  \forall A\in\mathcal{L}^\dag(\mathcal{D}),
\end{equation*}
where $\gamma> 0$. From (\ref{diseq3}) it follows that $X$ is bounded and $\overline{X}\in\mathcal{B}(\mathcal{H})$.
In the very same way one can prove the boundedness of $X$ if $X^\dag=-X$. The result for a general $X$ follows easily.

(i)$\Rightarrow$ (iii): this is a standard result of the C*-algebras theory.
\end{proof}

\section{Bounded elements of C*-inductive locally convex spaces}\label{sect_abstract}
The results obtained in Section \ref{sect_oprhs} have an abstract generalization to locally convex spaces that are inductive limits of C*-algebras in a generalized sense. These spaces were called {\em C*-inductive locally convex spaces} in \cite{betra2}. We begin with recalling the basic definitions.

\medskip
Let $\A$ be a vector space over ${\mb C}$. Let $\F$ be a set of
indices directed upward and consider, for every $\alpha\in\F$,
 a Banach space $\A_\alpha \subset \A$ such that:
 \begin{itemize}
\item[(I.1)] $\A_\alpha\subseteq\A_\beta$, if $\alpha\leq\beta$;
\item[(I.2)] $\A=\bigcup_{\alpha\in\F}\A_\alpha$;
\item[(I.3)] $\forall\alpha\in\F$, there exists a C*-algebra $\BB_\alpha$ (with unit $e_\alpha$ and norm $\|\cdot\|_\alpha$)   and a norm-preserving isomorphism of vector spaces $\phi_\alpha:\BB_\alpha\to\A_\alpha$;
\item[(I.4)] $x_\alpha\in\BB_\alpha^+\Rightarrow  x_\beta=(\phi_\beta^{-1} \phi_\alpha) (x_\alpha)\in\BB_\beta^+$, for every $\alpha, \beta \in \F$ with $\beta \geq \alpha$.
\end{itemize}

\medskip
We put $j_{\beta\alpha}= \phi_\beta^{-1} \phi_\alpha $, if
$\alpha, \beta \in \F,\,\beta \geq \alpha$.

 If $x\in\A$, there exist $\alpha\in\F$ such
that $x\in\A_\alpha$ and (a unique) $x_\beta\in\BB_\beta$ such
that $x=\phi_\beta(x_\beta)$, for all $\beta\geq\alpha$.\\ Then, we put
$$
j_{\beta\alpha}(x_\alpha):=x_\beta \mbox{\qquad if }\alpha\leq\beta.
$$

By (I.4), it follows easily that $j_{\beta \alpha}$  preserves the involution; i.e., $j_{\beta \alpha}(x_\alpha^*)=(j_{\beta \alpha}(x_\alpha))^*$.

The family $\{\BB_\alpha, j_{\beta\alpha}, \beta\geq \alpha\}$ is a {\em directed system of C*-algebras}, in the sense that:

\begin{itemize}\item[(J.1)]for every $\alpha, \beta \in \F$, with $\beta \geq \alpha$,
$j_{\beta \alpha}:\BB_\alpha \to \BB_\beta$ is a linear and
injective map; $j_{\alpha \alpha}$ is the identity of $\BB_\alpha$,
\item[(J.2)] for every $\alpha, \beta \in
\F$, with $\alpha \leq \beta$, \ $\phi_{\alpha} =\phi_{\beta}j_{\beta
\alpha}.$
 \item[(J.3)] $ j_{\gamma\beta}j_{\beta \alpha}=j_{\gamma \alpha}$, $ \alpha \leq \beta \leq \gamma$.
   \end{itemize}
   We assume that, in addition, the $j_{\beta \alpha}$'s are Schwarz maps (see, e.g. \cite{palmer}); i.e.,
\begin{itemize}  \item[(\textsf{sch})] $j_{\beta \alpha}(x_\alpha)^* j_{\beta \alpha}(x_\alpha)\leq j_{\beta \alpha}(x_\alpha^*x_\alpha), \quad \forall x_\alpha \in \BB_\alpha,\, \alpha\leq \beta$.
  \end{itemize}
For every $\alpha, \beta \in \F$, with $\alpha\leq \beta$, $j_{\beta \alpha}$ is continuous \cite{palmer} and, moreover, $$\|j_{\beta \alpha}(x_\alpha)\|_\beta\leq \|x_\alpha\|_\alpha, \quad \forall x_\alpha \in \BB_\alpha.$$

\medskip An involution in $\A$ is defined as follows..
Let $x \in \A$. Then $x \in \A_\alpha$, for some $\alpha \in \F$, i.e., $x= \phi_\alpha (x_\alpha)$, for a unique $x_\alpha\in \BB_\alpha$. Put
$x^*:= \phi_\alpha (x_\alpha^*)$. Then if $\beta \geq \alpha$, we
have
$$\phi_\beta^{-1}(x^*)=\phi_\beta^{-1}(\phi_\alpha (x_\alpha^*))=j_{\beta \alpha}(x_\alpha^*)=(j_{\beta \alpha}(x_\alpha))^*=x_\beta^*.$$
It is easily seen that the map $x \mapsto x^*$ is an involution in
$\A$. Moreover, by the definition itself, it follows that every map
$\phi_\alpha$ {\em preserves the involution}; i.e.,
$\phi_\alpha(x_\alpha^*)=(\phi_\alpha(x_\alpha))^*$, for all
$x_\alpha \in \BB_\alpha,\, \alpha \in \F$.

 \bedefi \label{defn_23}A locally convex vector space $\A$, with involution $^*$, is called a {\em C*-inductive locally convex space} if \begin{itemize}
             \item[(i)]  there exists a
family \mbox{$\{\{\BB_\alpha, \phi_\alpha\}, \alpha\in \F\} $}, where $\F$ is a direct set and, for every $\alpha \in \F$, $\BB_\alpha$ is a
C*-algebra and $\phi_\alpha$ is a linear injective map of $\BB_\alpha$ into $\A$,
satisfying the above conditions (I.1) - (I.4) and (\textsf{sch}), with $\A_\alpha= \phi_\alpha(\BB_\alpha)$, $\alpha \in \F$;
             \item[(ii)] $\A$ is endowed
with the locally convex inductive topology $\itop$ generated by the
family \mbox{$\{\{\BB_\alpha, \phi_\alpha\}, \alpha\in \F\} $}.
           \end{itemize}
\findefi
The family \mbox{$\{\{\BB_\alpha, \phi_\alpha\}, \alpha\in \F\} $} is called {\em the  defining system of $\A$}. We notice that the involution is automatically continuous in $\A[\itop]$.

A C*-inductive locally convex space has a natural positive cone.

An element $x\in \A$ is called \emph{positive} if there
exists $\gamma\in\F$ such that $\phi_\alpha^{-1}(x)\in\BB_\alpha^+$,
$\forall\alpha\geq\gamma$. \\
We denote by $\A^+$ the set of all positive elements of $\A$.

\medskip

Then,
\begin{itemize}
\item[(i)] Every positive element $x\in \A$ is hermitian; i.e., $x\in\A_h:=\{y\in\A:\,y^*=y\}$
.
\item[(ii)]$\A^+$ is a non empty convex pointed cone; i.e. $\A^+ \cap (-\A^+)=\{0\}$.
\item[(iii)] If $\alpha \in \F$ and $x_\alpha \in \BB_\alpha^+$, $\phi_\alpha (x_\alpha)$ is positive.
\end{itemize}

Moreover,
every hermitian element $x=x^*$ is the
difference of two positive elements, i.e. there exist
$x^+,x^-\in\A^+$ such that $x=x^+-x^-$.

A linear functional $\omega$ is said to be {\em positive} if $\omega(x)\geq 0$ for every $x=(x_\alpha)\in \A$.
As shown in \cite[Prop. 3.9, 3.10]{betra2}, $\omega$ is positive if, and only if, $\omega_\alpha (x_\alpha):= \omega(\phi_\alpha (x_\alpha))\geq 0$ for every $\alpha\in \F$. We write, in this case, $\omega=\varinjlim \omega_\alpha$.
\bigskip

\subsection{Bounded elements}

\bedefi Let $\A$ be a C*-inductive locally convex space. An element $x =(x_\alpha)\in \A$, with $x_\alpha \in \BB_\alpha$, is called {\em bounded} if $x\in \A_\alpha$, for every $\alpha \in \F$ and
$ \sup_{\alpha \in \F} \|x_\alpha\|_\alpha <\infty$.
The set of bounded elements of $\A$ is denoted by $\A_b$.
\findefi

\begin{prop} The set $\A_b$ is a Banach space under the norm $\|x\|_b=\sup_{\alpha \in \F} \|x_\alpha\|_\alpha$.
\end{prop}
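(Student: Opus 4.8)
The plan is to show that $\A_b$ is a normed space under $\|\cdot\|_b$ and then that it is complete. First I would verify that $\|\cdot\|_b$ is a genuine norm on $\A_b$. The key subtlety, which I expect to be the main point requiring care rather than the main obstacle, is that for a bounded element $x$ the representatives $x_\alpha=\phi_\alpha^{-1}(x)$ are defined for \emph{every} $\alpha\in\F$ (since $x\in\A_\alpha$ for all $\alpha$), so the supremum $\sup_{\alpha}\|x_\alpha\|_\alpha$ ranges over a consistent net. The triangle inequality and homogeneity follow at once from the corresponding properties of each C*-norm $\|\cdot\|_\alpha$ together with the linearity and injectivity of the maps $\phi_\alpha$, which guarantee $(x+y)_\alpha=x_\alpha+y_\alpha$ and $(\lambda x)_\alpha=\lambda x_\alpha$. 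Definiteness holds because if $\|x\|_b=0$ then $x_\alpha=0$ for all $\alpha$, whence $x=\phi_\alpha(x_\alpha)=0$.

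Next I would establish completeness. Let $(x^{(n)})_n$ be a Cauchy sequence in $(\A_b,\|\cdot\|_b)$, and write $x^{(n)}_\alpha=\phi_\alpha^{-1}(x^{(n)})\in\BB_\alpha$ for its representatives. For each fixed $\alpha$, the inequality $\|x^{(n)}_\alpha-x^{(m)}_\alpha\|_\alpha\le\|x^{(n)}-x^{(m)}\|_b$ shows that $(x^{(n)}_\alpha)_n$ is Cauchy in the C*-algebra $\BB_\alpha$, hence converges to some $y_\alpha\in\BB_\alpha$ by completeness of $\BB_\alpha$. The essential compatibility check is that the limits $y_\alpha$ are coherent with the connecting maps, i.e. $j_{\beta\alpha}(y_\alpha)=y_\beta$ whenever $\alpha\le\beta$. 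This follows by passing to the limit in the identity $j_{\beta\alpha}(x^{(n)}_\alpha)=x^{(n)}_\beta$, using the continuity of $j_{\beta\alpha}$ (recall $\|j_{\beta\alpha}(z)\|_\beta\le\|z\|_\alpha$ from the excerpt). Because the representatives are coherent, there is a well-defined element $y\in\A$ with $\phi_\alpha(y_\alpha)=y$ for all $\alpha$; in particular $y\in\A_\alpha$ for every $\alpha$.

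I would then verify that $y\in\A_b$ and that $x^{(n)}\to y$ in $\|\cdot\|_b$. Boundedness of $y$: given $\varepsilon>0$, choose $N$ so that $\|x^{(n)}-x^{(m)}\|_b\le\varepsilon$ for $n,m\ge N$; then $\|x^{(n)}_\alpha-x^{(m)}_\alpha\|_\alpha\le\varepsilon$ uniformly in $\alpha$, and letting $m\to\infty$ gives $\|x^{(n)}_\alpha-y_\alpha\|_\alpha\le\varepsilon$ for all $\alpha$ and all $n\ge N$. Taking the supremum over $\alpha$ yields $\|x^{(n)}-y\|_b\le\varepsilon$ for $n\ge N$, which simultaneously shows $\sup_\alpha\|y_\alpha\|_\alpha\le\|x^{(N)}\|_b+\varepsilon<\infty$ (so $y\in\A_b$) and that $x^{(n)}\to y$ in the $\|\cdot\|_b$-norm.

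The only real obstacle is the order-of-limits argument: one must interchange $\lim_{m\to\infty}$ with $\sup_\alpha$ (or equivalently derive the uniform-in-$\alpha$ estimate before taking limits in $m$), and must confirm that the fiberwise limits $y_\alpha$ assemble into a single element of $\A$ rather than an incoherent family. Both are handled by the uniform Cauchy estimate and the continuity of the connecting maps $j_{\beta\alpha}$; nothing deeper than the completeness of each $\BB_\alpha$ is needed. The argument is essentially the standard proof that an $\ell^\infty$-type product of Banach spaces is complete, adapted to respect the directed-system structure.
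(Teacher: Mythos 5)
Your proof is correct and follows essentially the same route as the paper's: fiberwise limits in each $\BB_\alpha$, coherence of the limit family via continuity of the connecting maps $j_{\beta\alpha}$, and the uniform Cauchy estimate to get both $y\in\A_b$ and norm convergence. The only difference is that you spell out the norm axioms and the coherence check $j_{\beta\alpha}(y_\alpha)=y_\beta$, which the paper leaves to the reader.
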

\begin{proof} We only prove the completeness. Let $\{x_n\}$ be a Cauchy sequence in $\A_b$. Then for every $\alpha \in \F$ the sequence $\{x_n^\alpha\}$, with $x_n^\alpha:=(x_n)_\alpha$, is Cauchy in $\BB_\alpha$ so it converges to some $x_\alpha\in \BB_\alpha$. Since the $j_{\beta\alpha}$'s are continuous, one easily proves that the family $\{x_\alpha\}$ defines an element $x=(x_\alpha)$ of $\A$. From the Cauchy condition, for every $\epsilon >0$, there exists $n_\epsilon\in {\mb N}$ such that
\begin{equation} \label{eqn_CC}\sup_{\alpha \in \F} \|x_n^\alpha - x_m^\alpha\|_\alpha <\epsilon \end{equation}

If $m>n_\epsilon$,
$$ \|x_\alpha\|_\alpha \leq \|x_\alpha - x_m^\alpha\|_\alpha +\|x_m^\alpha\|_\alpha\leq\epsilon + \|x_m^\alpha\|_\alpha.$$
Hence,
$$\sup_{\alpha \in \F} \|x_\alpha\|_\alpha \leq \epsilon + \sup_{\alpha \in \F}\|x_m^\alpha\|_\alpha<\infty.$$
Thus $x \in \A_b$.

Fix now $n>n_\epsilon$ and let $m \to \infty$ in \eqref{eqn_CC}. Then,
$$\sup_{\alpha \in \F} \|x_n^\alpha - x_\alpha\|_\alpha \leq\epsilon .$$
This proves that $x_n \to x$.
\end{proof}

In what follows we will consider *-representations of a C*-inductive locally convex space. We recall the basic definitions.

\medskip

Let $\F$ be a set directed upward by $\leq$. A family $\{\H_\alpha, U_{\beta \alpha}, \alpha, \beta \in \F,
\beta\geq \alpha\}$,  where each
$\H_\alpha$ is a Hilbert space
(with inner product
$\ip{\cdot}{\cdot}_\alpha$ and norm $\|\cdot\|_\alpha$)  and, for every $\alpha, \beta \in \F$, with $\beta\geq \alpha$, $U_{\beta \alpha}$ is a linear map  from $\H_\alpha$ into  $\H_\beta$, is called a {\em directed contractive system of
Hilbert spaces} if the following conditions are satisfied
\begin{itemize}
                \item[(i)] $U_{\beta \alpha}$ is injective;
                \item[(ii)] $\|U_{\beta \alpha}\xi_\alpha\|_\beta \leq \|\xi_\alpha\|_\alpha, \quad \forall \xi_\alpha \in \H_\alpha$;
                \item[(iii)] $U_{\alpha\alpha}=I_\alpha$, the identity of $\H_\alpha$;
                \item[(iv)] $U_{\gamma\alpha}=U_{\gamma\beta}U_{\beta\alpha}$, $\alpha\leq \beta\leq \gamma$.
              \end{itemize}

A directed contractive system of
Hilbert spaces defines a conjugate dual pair $(\D^\times,\D)$ which is called
the {\em joint topological limit} \cite{betra} of the directed contractive system
$\{\H_\alpha, U_{\beta \alpha}, \alpha, \beta \in \F, \beta\geq
\alpha\}$ of Hilbert spaces.

\bedefi \label{prop_induclimrepres}
Let $\A$ be the C*-inductive
locally convex space defined by the system
\mbox{$\{\{\BB_\alpha, \Phi_\alpha\}, \alpha\in \F\} $} as in
Definition \ref{defn_23}.

For each $\alpha \in \F$, let  $\pi_\alpha$ be a *-representation
of $\BB_\alpha$ in Hilbert space $\H_\alpha$. The collection $\pi:=\{\pi_\alpha\}$ is said to be a *-representation of $\A$ if
\begin{itemize}
\item[(i)] for every $\alpha, \beta \in \F$ there exists a linear map $U_{\beta\alpha}: \H_\alpha \to \H_\beta$ such that the family $\{\H_\alpha, U_{\beta \alpha}, \alpha, \beta \in \F, \beta\geq
\alpha\}$ is a directed contractive system of
Hilbert spaces;
\item[(ii)] the following equality holds
\begin{equation}\label{eq_coherence} \pi_\beta (j_{\beta\alpha} (x_\alpha)) = U_{\beta\alpha}\pi_\alpha(x_\alpha) U_{\beta\alpha}^*, \quad \forall x_\alpha \in \BB_\alpha, \, \beta \geq \alpha.\end{equation}
\end{itemize}
In this case we write $\pi(x)= \varinjlim \pi_\alpha (x_\alpha)$ for every $ x=(x_\alpha)\in \A$ or, for short, $\pi= \varinjlim \pi_\alpha$.

The *-representation $\pi$ is said to be {\em faithful} if $x \in \A^+$ and $\pi(x)=0$ imply $x=0$ (of course, $\pi(x)=0$ means that there exists $\gamma\in \F$ such that $\pi_\alpha (x_\alpha)=0$, for $\alpha\geq \gamma$).
\findefi
\berem With this definition (which is formally different from that given in \cite{betra2} but fully equivalent), $\pi(x)$, $x\in \A$, is not an operator but rather a collection of operators.
But as shown in \cite{betra2}, $\pi(x)$ can be regarded as an operator acting on the joint topological limit $(\D^\times, \D)$ of $\{\H_\alpha, U_{\beta \alpha}, \alpha, \beta \in \F, \beta\geq
\alpha\}$. The corresponding space of operators was denoted by $\LBDDs$; it behaves in the very same way as the space $\LBDD{}$ studied in Section \ref{sect_oprhs} and reduces to it when the family of Hilbert spaces is exactly $\{ \H_A; A \in \LD\}$. The main difference consists in the fact that the $\H_\alpha$'s need not be all subspaces of a certain Hilbert space $\H$.
\enrem
\begin{lemma} \label{lemma45}Let $\pi=\varinjlim \pi_\alpha$ be a faithful *-representation of $\A$. Then, for every $\alpha \in \F$, $\pi_\alpha$ is a faithful *-representation of $\BB_\alpha$.

\end{lemma}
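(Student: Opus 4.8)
The plan is to reduce the faithfulness of each $\pi_\alpha$ — which, since $\BB_\alpha$ is a C*-algebra, is nothing but injectivity — to a statement about positive elements, and then to transport that statement up to $\A$, where the faithfulness of $\pi$ can be invoked. The reduction rests on the standard C*-algebraic fact that the kernel of a *-representation $\pi_\alpha$ of $\BB_\alpha$ is a closed two-sided *-ideal, and that such an ideal is trivial precisely when it contains no nonzero positive element: if $0\neq a\in\ker\pi_\alpha$, then $a^*a$ is a positive element of the kernel, since $\pi_\alpha(a^*a)=\pi_\alpha(a)^*\pi_\alpha(a)=0$, and $\|a^*a\|_\alpha=\|a\|_\alpha^2\neq 0$ shows $a^*a\neq 0$. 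Hence it suffices to prove: if $x_\alpha\in\BB_\alpha^+$ and $\pi_\alpha(x_\alpha)=0$, then $x_\alpha=0$.

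With this in hand I would fix $\alpha\in\F$ and $x_\alpha\in\BB_\alpha^+$ with $\pi_\alpha(x_\alpha)=0$, and set $x:=\phi_\alpha(x_\alpha)\in\A$. By property (iii) of the positive cone, $x\in\A^+$. Moreover the representatives of $x$ above $\alpha$ are exactly $x_\beta=j_{\beta\alpha}(x_\alpha)$ for $\beta\geq\alpha$, since $\phi_\beta(j_{\beta\alpha}(x_\alpha))=\phi_\alpha(x_\alpha)=x$.

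Next I would compute $\pi(x)$ by means of the coherence relation \eqref{eq_coherence}: for every $\beta\geq\alpha$,
$$\pi_\beta(x_\beta)=\pi_\beta(j_{\beta\alpha}(x_\alpha))=U_{\beta\alpha}\,\pi_\alpha(x_\alpha)\,U_{\beta\alpha}^*=0,$$
because $\pi_\alpha(x_\alpha)=0$. Thus, taking $\gamma=\alpha$ in the definition of the vanishing of $\pi(x)$, we obtain $\pi(x)=0$. Since $x\in\A^+$ and $\pi$ is faithful, this forces $x=0$; and since $\phi_\alpha$ is injective, $x_\alpha=\phi_\alpha^{-1}(x)=0$. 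This yields the required implication and hence the faithfulness (injectivity) of $\pi_\alpha$.

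The only genuinely non-formal ingredient is the reduction in the first paragraph, namely the recognition that for a C*-algebra representation injectivity is equivalent to triviality of the kernel on the positive cone; this is precisely what makes the positive-cone formulation of faithfulness on $\A$ usable. Everything afterwards is a direct application of the coherence identity and the definition of faithfulness on $\A$, so I do not anticipate any serious obstacle — the one point that must be spelled out with care is the matching of the two notions of \emph{faithful} (injectivity for $\BB_\alpha$ versus the positive-cone condition for $\A$).
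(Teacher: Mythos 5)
Your proof is correct and follows essentially the same route as the paper's: transport $x_\alpha\in\BB_\alpha^+$ with $\pi_\alpha(x_\alpha)=0$ to $x=\phi_\alpha(x_\alpha)\in\A^+$, use the coherence relation \eqref{eq_coherence} to get $\pi(x)=0$, invoke faithfulness of $\pi$ to conclude $x=0$, and hence $x_\alpha=0$. The only (welcome) differences are that you make explicit the standard reduction of injectivity of $\pi_\alpha$ to vanishing on the positive cone, which the paper leaves implicit, and that you conclude $x_\alpha=0$ directly from the injectivity of $\phi_\alpha$ rather than via the representatives $x_\gamma$ and the injectivity of $j_{\beta\alpha}$ as the paper does.
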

\begin{proof}
Let $x_\alpha\in \BB_\alpha^+$ with $\pi_\alpha(x_\alpha)=0$. Let $x\in \A$ be the unique element of $\A$ such that $x=\phi_\alpha(x_\alpha)$.
Then $\pi_\beta(x_\beta)=\pi_\beta(j_{\beta\alpha}(x_\alpha))= U_{\beta\alpha} \pi_\alpha(x_\alpha)U_{\beta\alpha}^*=0$. Hence $\pi(x)=0$ and, therefore $x=0$. Thus there exists $\overline{\gamma}\in \F$ such that $x_\gamma=0$, for $\gamma\geq \overline{\gamma}$. Let $\beta\geq \alpha, \overline{\gamma}$. Then $0=x_\beta= j_{\beta\alpha}(x_\alpha)$. Hence, by the injectivity of $j_{\beta\alpha}$, $x_\alpha=0$.
\end{proof}

As shown in \cite[Proposition 3.16]{betra2}, if a C*-inductive locally convex space $\A$ fulfills the following conditions

\begin{itemize}
\item[({\sf r}$_1$)]if $x_\alpha \in \BB_\alpha$ and $j_{\beta\alpha}(x_\alpha)\geq 0$, $\beta\geq \alpha$, then $x_\alpha\geq 0$;
\item[({\sf r}$_2$)]$e_\beta \in j_{\beta\alpha}(\BB_\alpha), \quad \forall \alpha, \beta \in \F, \beta \geq \alpha$;
\item[({\sf r}$_3$)] every  positive linear functional $\omega=\varinjlim \omega_\alpha$ on $\A$ satisfies the following property
\begin{itemize}
\item[$\bullet$] if $\alpha \in \F$ and $\omega_\beta (j_{\beta\alpha}(x_\alpha^*)j_{\beta\alpha}(x_\alpha))=0$, for some $\beta > \alpha$ and $x_\alpha \in \BB_\alpha$, then $\omega_\alpha (x_\alpha^* x_\alpha)=0$;
\end{itemize}
\end{itemize}
then, $\A$ admits a faithful representation. The conditions ({\sf r}$_1$), ({\sf r}$_2$), in fact, guarantee that $\A$ possesses sufficiently many positive linear functionals, in the sense that for every $x \in \A^+$, $x\neq 0$ there exists a positive linear functional $\omega$ on $\A$ such that $\omega(x)>0$ \cite[Theorem 3.14]{betra2}.

\begin{thm} Let $\A$ be a C$^*$-inductive locally convex space and $x=(x_\alpha) \in \A$.  The following statements hold.
\begin{itemize}
\item[(i)] If $x\in \A_b$, then, for every *-representation $\pi=\varinjlim \pi_\alpha$ of $\A$, one has $$\sup_{\alpha \in \F}\|\pi_\alpha (x_\alpha)\|_{\alpha\alpha}< \infty,$$
where $\|\cdot \|_{\alpha\alpha}$ denote the norm of $\BB(\H_\alpha)$.

\item[(ii)]Conversely, if $\A$
admits a faithful *-representation $\pi^f=\varinjlim \pi_\alpha^f$ and $$\sup_{\alpha \in \F}\|\pi_\alpha^f (x_\alpha)\|_{\alpha\alpha}<\infty,$$ then  $x\in\A_b$.
\end{itemize}
\end{thm}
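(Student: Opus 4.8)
The plan is to reduce both implications to two standard facts about *-homomorphisms of $C^*$-algebras and to apply them componentwise to the representation $\pi=\varinjlim\pi_\alpha$. For (i) the relevant fact is that every *-homomorphism of $C^*$-algebras is norm-decreasing; for (ii) it is that every injective *-homomorphism of $C^*$-algebras is isometric.

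For (i) I would argue as follows. Since $x\in\A_b$, the element $x$ admits a representative $x_\alpha\in\BB_\alpha$ for every $\alpha\in\F$ and $\|x\|_b=\sup_{\alpha\in\F}\|x_\alpha\|_\alpha<\infty$. Each $\pi_\alpha$ is a *-representation of the $C^*$-algebra $\BB_\alpha$ in $\H_\alpha$, hence a *-homomorphism of $\BB_\alpha$ into $\BB(\H_\alpha)$, and therefore contractive: $\|\pi_\alpha(x_\alpha)\|_{\alpha\alpha}\leq\|x_\alpha\|_\alpha$ for every $\alpha$. Passing to the supremum yields $\sup_{\alpha\in\F}\|\pi_\alpha(x_\alpha)\|_{\alpha\alpha}\leq\|x\|_b<\infty$, which is (i). No further input is needed here.

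For (ii) the key is to convert the faithfulness of $\pi^f$ into an isometry on each level. Since $\pi^f=\varinjlim\pi_\alpha^f$ is faithful, Lemma \ref{lemma45} tells us that each $\pi_\alpha^f$ is a faithful *-representation of $\BB_\alpha$. First I would upgrade faithfulness on positive elements (the form in which it is phrased in Definition \ref{prop_induclimrepres} and established in Lemma \ref{lemma45}) to genuine injectivity of $\pi_\alpha^f$: if $\pi_\alpha^f(a)=0$ for $a\in\BB_\alpha$, then $\pi_\alpha^f(a^*a)=\pi_\alpha^f(a)^*\pi_\alpha^f(a)=0$ with $a^*a\in\BB_\alpha^+$, so $a^*a=0$ and hence $a=0$. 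An injective *-homomorphism of $C^*$-algebras is isometric, whence $\|\pi_\alpha^f(x_\alpha)\|_{\alpha\alpha}=\|x_\alpha\|_\alpha$ for every $\alpha$. The hypothesis then gives $\sup_{\alpha\in\F}\|x_\alpha\|_\alpha=\sup_{\alpha\in\F}\|\pi_\alpha^f(x_\alpha)\|_{\alpha\alpha}<\infty$, so $x\in\A_b$ once the index bookkeeping addressed below is taken into account.

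The only step that is more than a routine citation is the passage from \emph{faithful} (a condition imposed in Definition \ref{prop_induclimrepres} only on positive elements, and correspondingly transferred to each $\pi_\alpha^f$ in Lemma \ref{lemma45}) to full injectivity, and thence to isometry, of each component $\pi_\alpha^f$; this is where I expect the main, though modest, obstacle to lie. I would also be careful about the index bookkeeping: membership in $\A_b$ requires not merely finiteness of the supremum but also that $x$ possess a representative $x_\alpha$ in every $\BB_\alpha$. In (ii) this existence is already built into the standing assumption that the net $(x_\alpha)$—and hence the supremum $\sup_{\alpha\in\F}\|\pi_\alpha^f(x_\alpha)\|_{\alpha\alpha}$—is indexed by the whole directed set $\F$, so that $x\in\A_\alpha$ for every $\alpha$, which together with the finiteness just obtained gives exactly $x\in\A_b$.
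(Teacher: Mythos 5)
Your proposal is correct and follows essentially the same route as the paper: part (i) via contractivity of *-representations of C*-algebras, and part (ii) via Lemma \ref{lemma45} together with the fact that a faithful *-representation of a C*-algebra is isometric. The only difference is that you spell out the (easy) upgrade from faithfulness on positive elements to injectivity, and the index bookkeeping for membership in $\A_b$, both of which the paper leaves implicit.
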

\begin{proof} (i):\; For every $\alpha \in \F$, $\pi_\alpha$ is a *-representation of the C*-algebra $\BB_\alpha$. Hence
$$\|\pi_\alpha (x_\alpha)\|_{\alpha\alpha}\leq \|x_\alpha\|_\alpha.$$ Thus if $x \in \A_b$ the statement follows immediately from the definition.

\noindent(ii):\; Let  $ \pi^f(x)= \varinjlim \pi_\alpha^f (x_\alpha)$. Then, by Lemma \ref{lemma45}, for every $\alpha \in \F$, $ \pi_\alpha^f$ is a faithful re\-pre\-sen\-tation of $\BB_\alpha$. The *-re\-pre\-sen\-tation $\pi_\alpha^f$ is an isometric isomorphism of C$^*$-algebras, for all $\alpha\in\F$; hence  $$\sup_{\alpha \in \F}\|x_\alpha\|_\alpha=\sup_{\alpha \in \F}\|\pi_\alpha^f(x_\alpha)\|_{\alpha\alpha}<\infty.$$ This proves that $x$ is a bounded element of $\A$.
\end{proof}

\subsection{Order bounded elements}
Let $\A$ be a C*-inductive locally convex space. If $x \in \A$, we put
$$ \Re(x) = \frac{x+x^*}{2} \; \mbox{ and } \; \Im(x)=\frac{x-x^*}{2i}.$$ Both $\Re(x)$ and $\Im(x)$ are symmetric elements of $\A$.

Assume that $\A$ has an element $u = u^{*}$ such that $\|u_\alpha \|_\alpha \leq 1$, for every $\alpha \in \F$, and there exists $\gamma\in \F$ such that $u_\beta = j_{\beta \gamma } (e_\gamma)$ $\forall \beta \geq \gamma$, ($e_\gamma$ is the unit of $\mathfrak{B}_\gamma$). For shortness we call the element $u$ a {\em pre-unit} of $\A$.

\berem
The pre-unit $u\in\mathfrak{A}$, if any, is unique. Indeed, let suppose there is another $v\in\mathfrak{A}$ satisfying the same properties as $u$. Then, $$\exists \gamma , \gamma ' \in \F;\; u_\beta = j_{\beta \gamma } (e_\gamma) ,\; v_{\beta'}  = j_{\beta ' \gamma ' } (e_\gamma ') ,\quad \forall \beta \geq \gamma , \beta ' \geq \gamma '$$
so, if $\delta\geq\gamma , \gamma '$, one has $u_\lambda = v_\lambda $, $\forall \lambda\geq\delta$.
\enrem

\bedefi Let $\A$ be a C*-inductive locally convex space with pre-unit $u$. We say that $x \in \A$ is {\em order bounded} (with respect to $u$) if there exists $\lambda >0$ such that
$$ -\lambda u \leq \Re(x) \leq \lambda u \qquad -\lambda u \leq \Im(x) \leq \lambda u.$$
\findefi

{\begin{thm} \label{bounded_order}
Let $\A$ be a C*-inductive locally convex space satisfying condition $\mbox{\rm({\sf r}$_1$)}$. Assume that $\A$ has a pre-unit $u$.

 Then, $x \in \A_b$ if, and only if, $x$ has a representative for every $\alpha \in \F$ (i.e. for every $\alpha \in \F$, there exists $x_\alpha \in \BB_\alpha$ such that $x=\phi_\alpha (x_\alpha)$)  and $x$ is order bounded with respect $u$.
\end{thm}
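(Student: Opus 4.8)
The plan is to establish the two implications independently. The pre-unit $u$ is the device that lets me pass between norm bounds inside each C*-algebra $\BB_\alpha$ and order bounds in $\A$, while condition ({\sf r}$_1$) enters only in the converse. I would begin with the implication $x\in\A_b\Rightarrow$ \emph{($x$ has a representative everywhere and is order bounded)}. The existence of representatives is immediate from the definition of $\A_b$. Put $M:=\|x\|_b$ and work at the index $\gamma$ associated with $u$. Since $\|x_\gamma\|_\gamma\le M$, the self-adjoint elements $\Re(x_\gamma)$ and $\Im(x_\gamma)$ have norm at most $M$, so $Me_\gamma\pm\Re(x_\gamma)\ge0$ and $Me_\gamma\pm\Im(x_\gamma)\ge0$ in $\BB_\gamma$. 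Applying the positivity-preserving embedding $j_{\beta\gamma}$ (condition (I.4)) and using $u_\beta=j_{\beta\gamma}(e_\gamma)$ together with $\Re(x_\beta)=j_{\beta\gamma}(\Re(x_\gamma))$, I obtain $Mu_\beta\pm\Re(x_\beta)\ge0$ and $Mu_\beta\pm\Im(x_\beta)\ge0$ for every $\beta\ge\gamma$. By the definition of $\A^+$ this is precisely $-Mu\le\Re(x)\le Mu$ and $-Mu\le\Im(x)\le Mu$, so $x$ is order bounded with $\lambda=M$. Note that this half needs only the pre-unit, not ({\sf r}$_1$).

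For the converse I would assume that $x$ has a representative in every $\BB_\alpha$ and that $-\lambda u\le\Re(x)\le\lambda u$, $-\lambda u\le\Im(x)\le\lambda u$ for some $\lambda>0$. Each of the four elements $\lambda u\pm\Re(x)$, $\lambda u\pm\Im(x)$ lies in $\A^+$, so there is $\delta\in\F$ with $\lambda u_\beta\pm\Re(x_\beta)\ge0$ and $\lambda u_\beta\pm\Im(x_\beta)\ge0$ in $\BB_\beta$ for all $\beta\ge\delta$. Fix an arbitrary $\alpha\in\F$ and set $y_\alpha:=\lambda u_\alpha-\Re(x_\alpha)\in\BB_\alpha$, which is meaningful because both $u$ and $x$ belong to $\A_\alpha$. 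Coherence of the representatives gives $j_{\beta\alpha}(y_\alpha)=\lambda u_\beta-\Re(x_\beta)\ge0$ for all $\beta\in\F$ with $\beta\ge\alpha,\delta$. Condition ({\sf r}$_1$) then reflects this positivity down to level $\alpha$, yielding $y_\alpha\ge0$, i.e. $\Re(x_\alpha)\le\lambda u_\alpha$. Running the same argument on the other three relations gives $-\lambda u_\alpha\le\Re(x_\alpha)\le\lambda u_\alpha$ and the analogous bound for $\Im(x_\alpha)$. Finally, $u=u^*$ makes $u_\alpha$ self-adjoint with $\|u_\alpha\|_\alpha\le1$, hence $u_\alpha\le e_\alpha$ in $\BB_\alpha$; therefore $-\lambda e_\alpha\le\Re(x_\alpha)\le\lambda e_\alpha$, so $\|\Re(x_\alpha)\|_\alpha\le\lambda$, and likewise $\|\Im(x_\alpha)\|_\alpha\le\lambda$. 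Consequently $\|x_\alpha\|_\alpha\le2\lambda$ for every $\alpha$, whence $\sup_{\alpha\in\F}\|x_\alpha\|_\alpha\le2\lambda<\infty$ and $x\in\A_b$.

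The step I expect to be the real obstacle is the reflection of positivity in the converse. What the order bound delivers is positivity of $j_{\beta\alpha}(y_\alpha)$ only for indices beyond $\delta$, that is, on a cofinal (and, by (I.4), upward-closed) subset of $\{\beta\ge\alpha\}$, whereas concluding $y_\alpha\ge0$ at the base index $\alpha$ is exactly the office of ({\sf r}$_1$). Making this application airtight — in particular verifying that cofinal positivity of the net $\{j_{\beta\alpha}(y_\alpha)\}$ genuinely triggers ({\sf r}$_1$) at the \emph{arbitrary} index $\alpha$, and not merely at indices lying above $\delta$ — is the crux; the remaining ingredients (the elementary C*-inequalities, the inequality $u_\alpha\le e_\alpha$, and the coherence identity $j_{\beta\alpha}(y_\alpha)=y_\beta$) are routine. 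It is instructive that in the concrete model \LBDD{} this difficulty is invisible: there the index set has a least element, at which the order bound is felt directly and from which contractivity of the connecting maps controls every other level, and the abstract hypothesis ({\sf r}$_1$) is precisely what substitutes for that least element.
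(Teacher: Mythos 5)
Your proof is correct and follows essentially the same route as the paper's: the forward implication pushes the C*-inequality $-\|x\|_b e_\gamma\leq x_\gamma\leq \|x\|_b e_\gamma$ up through the order-preserving maps $j_{\beta\gamma}$ to get the order bound, and the converse uses ({\sf r}$_1$) to reflect positivity of $\lambda u_\beta\pm x_\beta$ (known only for $\beta$ beyond some $\delta$) down to an arbitrary index $\alpha$, then converts $-\lambda u_\alpha\leq\cdot\leq\lambda u_\alpha$ into a norm bound via $\pm u_\alpha\leq e_\alpha$ — exactly the paper's argument, including the way ({\sf r}$_1$) is invoked from a single $\delta\geq\alpha,\gamma$. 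The only cosmetic difference is that the paper reduces to the hermitian case $x=x^*$ (obtaining the bound $\lambda$) whereas you treat $\Re(x)$ and $\Im(x)$ separately (obtaining $2\lambda$).
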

\begin{proof} Let us assume that $ x =x^* \in \A_b $. Then, $x$ has a representative $x_\alpha$, with $x_\alpha^* = x_\alpha$, in every $\BB(\H_\alpha)$ and $\lambda:= \sup_{\alpha\in \F}\|x_\alpha\|_\alpha <\infty.$ Hence, we have $$ -\lambda e_\alpha \leq x_\alpha \leq \lambda e_\alpha , \quad \forall \alpha \in \F,$$ where $e_\alpha$ denotes the unit of $\BB_\alpha$. By the definition of $u$, there exists $\gamma\in \F$ such that $u_\beta= j_{\beta\gamma}(e_\gamma)$ for $\beta\geq \gamma$. Hence, taking into account that the maps $j_{\beta \alpha}$ preserve the order, we have $$ -\lambda u_\beta \leq x_\beta \leq \lambda u_\beta , \quad \forall \beta\geq\gamma.$$ This implies that $-\lambda u\leq x\leq \lambda u$.\\
Now, let us suppose that for some $\lambda>0$,  $-\lambda u\leq x\leq \lambda u$. Then, there exists $\gamma\in \F $ such that \begin{equation}\label{eq_order} -\lambda u_\beta \leq x_\beta \leq \lambda u_\beta ,\qquad \forall \beta\geq\gamma .\end{equation} Let now $\alpha \in \F$. Then, there is $\delta\geq \alpha, \gamma$ such that \eqref{eq_order} holds for $\delta\geq \alpha$.
Hence, by using ({\sf r}$_1$), we conclude that $$ -\lambda u_\alpha \leq x_\alpha \leq \lambda u_\alpha\quad \forall \alpha\in \F .$$  This implies that, $\|x_\alpha\|_\alpha\leq \lambda$, for every $\alpha \in \F$. Thus, $ x\in \mathfrak{A}_b.$
\end{proof}

From the proof of the previous theorem it follows easily that
\begin{prop}\label{4.11}
Let $x=x^*\in\mathfrak{A}_b$ and put $$p(x) = \inf \{\lambda >0; \, -\lambda u\leq x\leq \lambda u \}.$$ Then, $p(x) = \|x\|_b$.
\end{prop}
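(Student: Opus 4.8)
The plan is to obtain the proposition almost for free from the two implications established in the proof of Theorem \ref{bounded_order}, since each of them carries quantitative information about $\lambda$ that is exactly what is needed to pin down the infimum $p(x)$.

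First I would establish the inequality $p(x)\le\|x\|_b$. The trivial case $x=0$ gives $\|x\|_b=0$ and admits every $\lambda>0$, so $p(x)=0$ as well; I therefore assume $\|x\|_b>0$ and set $\lambda_0:=\|x\|_b=\sup_{\alpha\in\F}\|x_\alpha\|_\alpha$. The forward direction of Theorem \ref{bounded_order} was proved precisely by taking $\lambda$ to be this supremum and deducing $-\lambda_0 u\le x\le\lambda_0 u$. Hence $\lambda_0$ lies in the set $\{\lambda>0:\ -\lambda u\le x\le\lambda u\}$ whose infimum defines $p(x)$, and therefore $p(x)\le\lambda_0=\|x\|_b$.

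Next I would establish the reverse inequality $p(x)\ge\|x\|_b$. Let $\lambda>0$ be any admissible value, i.e. $-\lambda u\le x\le\lambda u$. The converse direction of Theorem \ref{bounded_order} shows, through condition $\mbox{\rm({\sf r}$_1$)}$, that this forces $\|x_\alpha\|_\alpha\le\lambda$ for every $\alpha\in\F$, whence $\|x\|_b=\sup_{\alpha\in\F}\|x_\alpha\|_\alpha\le\lambda$. Since this bound holds for every admissible $\lambda$, taking the infimum over such $\lambda$ yields $\|x\|_b\le p(x)$.

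Combining the two inequalities gives $p(x)=\|x\|_b$, as claimed. I do not expect any genuine obstacle: the only content beyond bookkeeping is the per-component C*-algebraic equivalence, for the self-adjoint $x_\alpha\in\BB_\alpha$, between the order bound $-\lambda e_\alpha\le x_\alpha\le\lambda e_\alpha$ and the norm bound $\|x_\alpha\|_\alpha\le\lambda$, together with the comparison $u_\alpha\le e_\alpha$ coming from $u_\alpha\ge 0$ and $\|u_\alpha\|_\alpha\le 1$. Both of these are already embedded in the proof of Theorem \ref{bounded_order}, so invoking that theorem directly is all that is required.
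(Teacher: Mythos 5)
Your proposal is correct and follows exactly the route the paper intends: the paper gives no separate argument for Proposition \ref{4.11} beyond the remark that it ``follows easily from the proof of the previous theorem,'' and your two inequalities are precisely the quantitative content of the two halves of that proof (the forward half exhibits $\lambda=\|x\|_b$ as admissible, the converse half shows every admissible $\lambda$ dominates each $\|x_\alpha\|_\alpha$ via $\mbox{\rm({\sf r}$_1$)}$ and $u_\alpha\leq e_\alpha$). Nothing further is needed.
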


\section{C*-inductive partial *-algebras}
As shown in \cite{betra2}, a partial multiplication in $\A$ can be defined by a family $w=\{w_\alpha\}$, $w_\alpha \in \BB_\alpha$.
Let $w=\{w_\alpha\}$ be a family of elements, such that each
$w_\alpha \in\BB_\alpha^+$ and $j_{\beta \alpha}(w_\alpha)=w_\beta$,
for all $\alpha, \beta \in \F$ with $\beta \geq \alpha$.

Let $x,y\in\A$. The partial multiplication $x\cdot y$ 
is defined by the conditions:
\begin{align*}\label{_defn_multiplication}
&\exists \gamma\in
\F:\,\phi_\beta(\phi^{-1}_\beta(x)w_\beta\phi^{-1}_\beta(y)) =
\phi_{\beta'}(\phi^{-1}_{\beta'}(x)w_{\beta'}\phi^{-1}_{\beta'}(y)), \; \forall\beta,\beta'\geq\gamma \\
& x\cdot y=\phi_\beta(\phi^{-1}_\beta(x)w_\beta \phi^{-1}_\beta(y)),
\quad \beta\geq\gamma.
\end{align*}

Then, $\A$ is an {\em associative} partial *-algebra with respect to the usual operations and
the above defined multiplication (see \cite[Sect. 2.1.1]{ait_book} for the definitions) and we will call it a {\em C*-inductive partial *-algebra}.

The partial *-algebra $\A$ has a unit $e$ (that is, an element $e$ which is a  left- and right universal multiplier such that $x\cdot e=e\cdot x=x$, for every $x \in \A$)  if, and only if, every element $w_\alpha$ of the family $\{w_\alpha\}$ defining the multiplication is invertible and
\begin{equation}\label{eqn_unit}  j_{\beta\alpha}(w_\alpha^{-1})=w_\beta^{-1}, \quad \forall \alpha, \beta\in\F, \,\beta\geq\alpha.
\end{equation}
In this case, $e=\phi_\alpha(w_\alpha^{-1})$, independently of
$\alpha\in \F$.

The element $e$ is called a {\em bounded unit} if it is a bounded element of $\A$ and $\|e\|_b=1$.

\begin{prop} Let $\A$ be a C*-inductive partial *-algebra with the multiplication defined by a family $\{w_\alpha\}$. Assume that  $e=(w_\alpha^{-1})$ is a bounded unit of $\A$. Then $\A_b$ is a Banach partial *-algebra; that is, $\A_b[\|\cdot\|_b] $ is a Banach space with isometric involution $^*$ and there exists $C\geq 1$ such that the following inequality holds
\begin{equation}\label{ineq_C} \|x\cdot y\|_b \leq C\|x\|_b\|y\|_b, \quad \forall x,y \in \A_b \mbox{\; with  $x\cdot y$ well-defined}.\end{equation}
\end{prop}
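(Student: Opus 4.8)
The statement packages three assertions: completeness of $(\A_b,\|\cdot\|_b)$, isometry of the involution, and the multiplicative bound \eqref{ineq_C}. The first needs no new argument, being precisely the completeness of $\A_b$ already established above for an arbitrary C*-inductive locally convex space. The second is immediate from the C*-identity in each fibre: writing $x=(x_\alpha)$ one has $x^*=(x_\alpha^*)$, and $\|x_\alpha^*\|_\alpha=\|x_\alpha\|_\alpha$ in the C*-algebra $\BB_\alpha$, so $\|x^*\|_b=\sup_\alpha\|x_\alpha^*\|_\alpha=\sup_\alpha\|x_\alpha\|_\alpha=\|x\|_b$. Hence the whole content lies in \eqref{ineq_C}, and my plan is to read it off fibrewise and then control the fibre constants uniformly by the defining family $w$.

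For the estimate, take $x=(x_\alpha),y=(y_\alpha)\in\A_b$ with $x\cdot y$ well defined. By the definition of the partial product there is $\gamma\in\F$ with $(x\cdot y)_\beta=x_\beta w_\beta y_\beta$ for every $\beta\geq\gamma$, and submultiplicativity of the C*-norm gives
\[
\|x_\beta w_\beta y_\beta\|_\beta\leq\|x_\beta\|_\beta\,\|w_\beta\|_\beta\,\|y_\beta\|_\beta\leq\|x\|_b\,\|w_\beta\|_\beta\,\|y\|_b .
\]
This singles out $C:=\|w\|_b=\sup_{\alpha\in\F}\|w_\alpha\|_\alpha$ as the natural constant: once one knows that $x\cdot y$ actually has a representative $x_\alpha w_\alpha y_\alpha$ in \emph{every} $\BB_\alpha$, taking the supremum over $\alpha$ yields $\|x\cdot y\|_b\leq C\,\|x\|_b\,\|y\|_b$. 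The bound $C\geq1$ is then forced: from $\|w_\alpha^{-1}\|_\alpha\leq\|e\|_b=1$ and $e_\alpha=w_\alpha w_\alpha^{-1}$ we get $1=\|e_\alpha\|_\alpha\leq\|w_\alpha\|_\alpha\,\|w_\alpha^{-1}\|_\alpha\leq\|w_\alpha\|_\alpha$, so $\|w\|_b\geq1$ (equivalently, test \eqref{ineq_C} on $x=y=e$, using $e\cdot e=e$ and $\|e\|_b=1$).

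Two points must still be secured, and these are where the hypotheses genuinely enter. First, one has to show that $C=\|w\|_b$ is \emph{finite}, i.e. that $w=(w_\alpha)\in\A_b$. The bounded-unit hypothesis supplies $\|w_\alpha^{-1}\|_\alpha\leq1$ for all $\alpha$, so every $w_\alpha$ is positive invertible with $w_\alpha\geq e_\alpha$; moreover contractivity of the connecting maps makes the net $\{\|w_\alpha\|_\alpha\}$ nonincreasing along $\F$, since $\|w_\beta\|_\beta=\|j_{\beta\alpha}(w_\alpha)\|_\beta\leq\|w_\alpha\|_\alpha$ for $\beta\geq\alpha$. The crux is to promote this to an honest uniform bound, and here the two relations $j_{\beta\alpha}(w_\alpha)=w_\beta$ and $j_{\beta\alpha}(w_\alpha^{-1})=w_\beta^{-1}$ from \eqref{eqn_unit}, together with the Schwarz property of $j_{\beta\alpha}$ (a two-positivity argument applied to the positive $2\times2$ matrix with entries $w_\alpha,e_\alpha,e_\alpha,w_\alpha^{-1}$), are the tools I expect to use to pin down $\sup_\alpha\|w_\alpha\|_\alpha<\infty$; I expect this to be the main obstacle. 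Second, one must verify that a well-defined product of bounded elements is itself bounded, i.e. that the family $\{x_\alpha w_\alpha y_\alpha\}_{\alpha\in\F}$ is coherent for \emph{all} $\alpha$ and not merely for $\beta\geq\gamma$; once coherence holds, uniqueness of representatives gives $(x\cdot y)_\alpha=x_\alpha w_\alpha y_\alpha$ throughout, the fibrewise estimate runs over the entire index set, and \eqref{ineq_C} follows with $C=\|w\|_b$. Theorem \ref{bounded_order} and Proposition \ref{4.11} may be invoked to phrase this membership in $\A_b$ in order-theoretic terms.
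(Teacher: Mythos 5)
The paper states this proposition without any proof (it is followed immediately by a remark), so there is no argument of the authors to compare yours against; I can only assess the proposal on its own terms. The parts you actually carry out are correct: completeness is indeed the earlier proposition on $\A_b$, isometry of the involution follows fibrewise from the C*-identity, the estimate $\|x_\beta w_\beta y_\beta\|_\beta\le\|x_\beta\|_\beta\,\|w_\beta\|_\beta\,\|y_\beta\|_\beta$ is valid for $\beta\ge\gamma$, and testing \eqref{ineq_C} on $x=y=e$ does force $C\ge1$. But the two points you explicitly defer --- finiteness of $C=\sup_{\alpha\in\F}\|w_\alpha\|_\alpha$, and the fact that $x\cdot y$ actually belongs to $\A_b$ (i.e.\ has coherent representatives $x_\alpha w_\alpha y_\alpha$ over \emph{all} of $\F$, not merely over $\{\beta:\beta\ge\gamma\}$) --- are precisely the nontrivial content of the statement, and neither is established; the proposal reduces the proposition to two open claims.

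The first deferral is the serious one, because the tools you name point the wrong way. From $\|w_\alpha^{-1}\|_\alpha\le1$ you get $w_\alpha^{-1}\le e_\alpha$, hence $w_\alpha\ge e_\alpha$: that is a \emph{lower} bound $\|w_\alpha\|_\alpha\ge1$, and gives no upper control. Monotonicity of $\{\|w_\alpha\|_\alpha\}$ along the order only bounds the norms above a fixed index; since $\F$ need not have a least element, the supremum over all of $\F$ is not controlled (every structure map $j_{\beta\alpha}$ runs upward and is contractive, so nothing in the axioms bounds norms at low indices from above). The $2\times2$-matrix trick would require $2$-positivity of $j_{\beta\alpha}$, which is not assumed (only (\textsf{sch}) is), and even then it yields relations inside $\BB_\beta$, not estimates on $\|w_\alpha\|_\alpha$. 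In fact one can construct a directed system satisfying (I.1)--(I.4), (\textsf{sch}) and \eqref{eqn_unit} --- take countably many mutually incomparable indices $a_n$ below a single top index $b$, all fibres equal to $\CN^2$, $w_{a_n}=(n,1)$, and connecting maps given by suitable entrywise nonnegative matrices with row sums at most $1$ --- in which $e$ is a bounded unit with $\|e\|_b=1$ but $\sup_\alpha\|w_\alpha\|_\alpha=\infty$; there \eqref{ineq_C} still holds (with $C=1$, since $\A_b=\CN e$), but certainly not with the constant $\|w\|_b$. So the hypothesis that $e$ is a bounded unit does not make your candidate constant finite: either an extra assumption such as $w\in\A_b$ (or $\sup_\alpha\|w_\alpha\|_\alpha<\infty$) must be imported, or a genuinely different estimate must be found. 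As written, the proof is incomplete at exactly the point where the work lies.
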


\berem The constant $C$ in \eqref{ineq_C} can be taken equal to $1$ if $w_\alpha^{-1}=e_\alpha$, for each $\alpha \in \F$, where $e_\alpha$ is the unit of the C*-algebra $\BB _\alpha$.
Under the same assumption, the norm of $\A_b$ satisfies the C*-property, which in our case reads
$$ \|x^*\cdot x\|_b = \|x\|_b^2, \quad \forall x \in \A_b \mbox{ with  $x^*\cdot x$ well-defined}.$$
This is no longer true in the general case.
\enrem

\berem In Example 5.3 of \cite{betra2} two of us tried to construct a family $\{W_A\in \BB(\Hil_A);\, A\in \LD \}$ so that the partial multiplication defined in $\LBDD{}$ by the method mentioned above would reproduce the quasi *-algebra structure of $(\LBDD{},\LD)$ (see Section \ref{sect_preliminaries}). Unfortunately, the conclusion of that discussion is uncorrect (see, \cite{bt_erratum} for more details).
\enrem

Let $\A$ be a C*-inductive partial *-algebra with the multiplication defined by a family $\{w_\alpha\}$ as above.
The spaces $R\A$   and $L\A$ of the right-, respectively, left universal multipliers (with respect to $w$) of $\A$ are algebras. Hence, $\Ao:=L\A\cap R\A$ is
a *-algebra and, thus,
\begin{itemize}
  \item[(i)] $(\A,\Ao)$ is a quasi *-algebra.
  \item[(ii)]If $\A$ is endowed with $\itop$, then the maps $x  \mapsto x^*$, $x\mapsto a\cdot x$, $x \mapsto x\cdot b$, $a,b \in \Ao$ are continuous.
\end{itemize}

\medskip
It is easily seen from the very definition that, if $a \in R\A$ and $x \in \A^+$, then $a^*xa \in \A^+$. Hence, if ${\mc P}(\mathfrak{A})$ denotes the family of all positive linear functionals on $\mathfrak{A}$, we have in particular $\omega(a^*xa)\geq 0$, for every $\omega \in {\mc P}(\A)$.

\begin{thm}\label{thm_end}
Let $\A$ be a C*-inductive partial *-algebra with the multiplication defined by a family $\{w_\alpha\}$ and with pre-unit $u$. Assume, moreover, that the following condition $\mbox{\rm({\sf P})}$ holds:
\begin{itemize}
\item[({\sf P})] $y\in \mathfrak{A}$, $\omega(a^*ya)\geq 0$, $\forall \omega \in {\mc P}(\mathfrak{A}) $ and $a\in R\mathfrak{A}$ $\Rightarrow$ $y\in \mathfrak{A}^{+}$ ;\end{itemize}
then, for $x\in \mathfrak{A}$, the following conditions are equivalent.
\begin{itemize}
\item[(i)] $x$ is order bounded with respect to $u$.\\
\item[(ii)]There exists $\gamma_x >0$ such that $$|\omega(a^*xa)|\leq \gamma_x \omega (a^*ua) , \quad\forall \omega \in {\mc P}(\mathfrak{A}), \quad \forall a \in R\A. $$
\item[(iii)] There exists $\gamma_x >0$ such that $$|\omega(b^*xa)|^2\leq\gamma_x\omega(a^*ua)\omega(b^*ub), \quad\forall \omega \in {\mc P}(\mathfrak{A}),\;\forall a,b\in R\A.$$
\end{itemize}
\end{thm}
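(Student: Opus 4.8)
The plan is to prove the cycle (i)$\Rightarrow$(iii)$\Rightarrow$(ii)$\Rightarrow$(i). Of these, (iii)$\Rightarrow$(ii) is immediate: taking $b=a$ in (iii) gives $|\omega(a^*xa)|^2\leq \gamma_x\,\omega(a^*ua)^2$, whence (ii) holds with constant $\sqrt{\gamma_x}$ (here $\omega(a^*ua)\geq 0$ because $u\in\A^+$ and so $a^*ua\in\A^+$). The hypothesis ({\sf P}) enters only in the final implication (ii)$\Rightarrow$(i); the other two steps rest just on the facts, recorded before the statement, that $a\in R\A$ and $y\in\A^+$ force $a^*ya\in\A^+$, that $\omega(z^*)=\overline{\omega(z)}$ for every $\omega\in{\mc P}(\A)$, and that each product $b^*xa$ is well defined since $b^*\in L\A$, $a\in R\A$ and $\A$ is associative.

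For (i)$\Rightarrow$(iii) I would first reduce to the symmetric case by writing $x=\Re(x)+i\Im(x)$ and estimating the two summands separately; thus it suffices to bound $|\omega(b^*ha)|$ for a symmetric $h$ with $-\lambda u\leq h\leq \lambda u$. Put $p=\lambda u+h$ and $q=\lambda u-h$: both lie in $\A^+$, both satisfy $p,q\leq 2\lambda u$, and $h=\tfrac12(p-q)$. For the positive element $p$ the form $(a,b)\mapsto \omega(b^*pa)$ on $R\A$ is positive semidefinite — conjugate symmetry comes from $(b^*pa)^*=a^*pb$ together with $\omega(z^*)=\overline{\omega(z)}$, and $\omega(a^*pa)\geq 0$ because $a^*pa\in\A^+$ — so Cauchy--Schwarz yields $|\omega(b^*pa)|^2\leq \omega(a^*pa)\,\omega(b^*pb)$. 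Since $p\leq 2\lambda u$ gives $\omega(a^*pa)\leq 2\lambda\,\omega(a^*ua)$, we obtain $|\omega(b^*pa)|\leq 2\lambda\,(\omega(a^*ua)\,\omega(b^*ub))^{1/2}$, and likewise for $q$. Combining via $h=\tfrac12(p-q)$ and then adding the contributions of $\Re(x)$ and $\Im(x)$ produces (iii), e.g. with $\gamma_x=16\lambda^2$.

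For (ii)$\Rightarrow$(i) I would start from the identities
\begin{equation*}
\omega(a^*\Re(x)a)=\Re\,\omega(a^*xa),\qquad \omega(a^*\Im(x)a)=\Im\,\omega(a^*xa),
\end{equation*}
valid for all $\omega\in{\mc P}(\A)$ and $a\in R\A$ because $(a^*xa)^*=a^*x^*a$ and $\omega(z^*)=\overline{\omega(z)}$. Both left-hand sides are then real and bounded in modulus by $|\omega(a^*xa)|\leq \gamma_x\,\omega(a^*ua)$, so that $\omega\big(a^*(\gamma_x u\pm\Re(x))a\big)\geq 0$ and $\omega\big(a^*(\gamma_x u\pm\Im(x))a\big)\geq 0$ for all $\omega$ and all $a\in R\A$. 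Applying ({\sf P}) to each of the four elements $\gamma_x u\pm\Re(x)$ and $\gamma_x u\pm\Im(x)$ gives $-\gamma_x u\leq\Re(x)\leq\gamma_x u$ and $-\gamma_x u\leq\Im(x)\leq\gamma_x u$, that is, $x$ is order bounded with $\lambda=\gamma_x$.

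The constant-chasing and the verifications that the relevant products lie in $\A$ are routine. The genuinely load-bearing step is (ii)$\Rightarrow$(i): condition ({\sf P}) is precisely what lets the pointwise positivity $\omega\big(a^*(\gamma_x u\pm\Re(x))a\big)\geq 0$, holding over all $\omega\in{\mc P}(\A)$ and all $a\in R\A$, be upgraded to the order relation $\gamma_x u\pm\Re(x)\in\A^+$; without ({\sf P}) this passage from tested positivity to genuine positivity would fail. The subsidiary point needing care is the positive semidefiniteness of the forms $(a,b)\mapsto\omega(b^*pa)$ (for positive $p$) used in (i)$\Rightarrow$(iii), which legitimizes the Cauchy--Schwarz estimate and rests on the stability $a^*\A^+a\subseteq\A^+$ together with the hermiticity of positive elements.
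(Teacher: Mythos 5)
Your proof is correct and follows essentially the same route as the paper: Cauchy--Schwarz applied to the positive sesquilinear forms $(a,b)\mapsto\omega(b^*pa)$ for $p\in\A^+$ gives (i)$\Rightarrow$(iii), and condition ({\sf P}) is invoked exactly where the paper invokes it, to upgrade the tested positivity $\omega\bigl(a^*(\gamma_x u\pm\cdot)a\bigr)\geq 0$ to membership in $\A^+$. The only real difference is that where the paper reduces to $x\in\A^+$ and defers the passage to general $x$ to Proposition 4.3 of \cite{betra2}, you make that step self-contained via the explicit decomposition $h=\tfrac12\bigl((\lambda u+h)-(\lambda u-h)\bigr)$ into positive elements dominated by $2\lambda u$ --- a harmless and welcome filling-in.
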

\begin{proof}
It is sufficient to consider the case $x=x^*$;\\
(i)$\Rightarrow $(ii): Let $\omega \in {\mc P}(\mathfrak{A})$. By the hypothesis, $-\gamma u\leq x\leq\gamma u$, for some $\gamma>0$; then $\omega(\gamma u - x)\geq 0$ and $\omega(a^* (\gamma u - x) a )\geq 0$, $\forall a \in R\A$. On the other hand, similarly, one can show that  $\omega(a^* (x- \gamma u ) a )\geq 0$.\\
(ii)$\Rightarrow $(i): Assume now that $u$ is a pre-unit and there exists $\gamma_x >0$ such that $$|\omega(a^*xa)|\leq \gamma_x \omega (a^*ua) ,\quad \forall \omega \in {\mc P}(\mathfrak{A}), \quad a \in R\A .$$ Then $$\gamma_x \omega (a^*ua)\pm \omega(a^*xa)\geq 0 \Rightarrow\omega(a^*(\gamma_x u\pm x)a)\geq 0,\quad \forall \omega \in {\mc P}(\mathfrak{A}), a \in R\A.$$
So, by ({\sf P}), $\gamma_x u\pm x\geq 0.$\\
(i)$\Rightarrow$(iii): By the assumption,
there exists $\gamma>0$ such that $-\gamma u\leq x\leq\gamma u$. Let $ \omega \in {\mc P}(\mathfrak{A})$. Then,
the linear functional $\omega_a$ on $\A$, defined by $\omega_a(x):=\omega(a^* x a)$, is positive. Hence, if $x=x^*$
$$  -\gamma \omega_a( u)\leq \omega_a(x)\leq\gamma \omega_a(u);$$
i.e.,
$$|\omega(a^*xa)|\leq \gamma \omega (a^*ua).$$

Now, let $x\in\A^+$, $a,b\in R\A$. Let us define $\Omega^x_\omega(a,b):=\omega(b^*  x  a)$.
Then, it is easily checked that $\Omega^x_\omega$ is a positive sesquilinear form on
$R\A\times R\A$. Using the
the Cauchy-Schwartz inequality we obtain
\begin{eqnarray*}
\label{ineq_second} |\omega(b^*  x  a)|&\leq & \omega(a^*  x  a)^{1/2}\omega(b^*  x  b)^{1/2} \\
   &\leq& \gamma
   \omega(a^* u  a)^{1/2}\omega(b^* u  b)^{1/2}.
\end{eqnarray*}
The extension to arbitrary $x \in \A$ goes through as in the proof of Proposition 4.3 of \cite{betra2}.

(iii)$\Rightarrow$(ii) It is trivial.
\end{proof}

The previous proof shows that if $x=x^*\in \A$ is order bounded with respect to $u$ then
$$p(x) \leq \sup\{ |\omega(b^*  x  a)|; \omega\in {\mc P}(\mathfrak{A}); \, a,b\in R\A; \omega(a^* u  a)=\omega(b^* u  b)=1\}.$$
where $p(x)$ is the quantity defined in Proposition \ref{4.11}.

\medskip

The following statement is an easy consequence of Theorem \ref{4.11} and Theorem \ref{thm_end}.
\begin{thm}Let $\A$ be a C*-inductive partial *-algebra with the multiplication defined by a family $\{w_\alpha\}$ and pre-unit $u$. Assume that conditions $\mbox{\rm({\sf r}$_1$)}$ and ({\sf P}) are satisfied. For an element $x \in \A$, having a representative in every $\BB_\alpha$, $\alpha \in \F$, the following statements are equivalent.
\begin{itemize}
\item[(i)] $x\in \A_b$.
\item[(ii)] $x$ is order bounded with respect to $u$.
\item[(iii)] For every $\omega \in {\mc P}(\mathfrak{A})$
$$|\omega(b^*xa)|^2\leq\gamma_x\omega(a^*ua)\omega(b^*ub), \quad\forall a,b\in R\A.$$
\end{itemize}
\end{thm}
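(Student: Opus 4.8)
The plan is to obtain the full three-way equivalence by chaining the two preceding theorems, whose hypotheses are all subsumed by those assumed here; no new argument is needed beyond matching the bridging notion, which in both cases is order boundedness with respect to $u$. Both cited theorems already carry out internally the reduction to the symmetric case $x=x^*$ via $\Re(x)$ and $\Im(x)$, so I would not repeat that reduction.

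For (i) $\Leftrightarrow$ (ii) I would invoke Theorem \ref{bounded_order} directly. It applies to any C*-inductive locally convex space satisfying (\textsf{r}$_1$) and carrying a pre-unit $u$ --- both granted here, the partial-multiplication structure being irrelevant to it --- and concludes that $x\in\A_b$ if and only if $x$ has a representative in every $\BB_\alpha$ and is order bounded with respect to $u$. Since we restrict attention to elements $x$ already possessing a representative in every $\BB_\alpha$, this is precisely (i) $\Leftrightarrow$ (ii). For (ii) $\Leftrightarrow$ (iii) I would invoke Theorem \ref{thm_end}, whose hypotheses --- C*-inductive partial *-algebra with pre-unit $u$ and condition (\textsf{P}) --- are exactly those assumed here. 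The equivalence between its condition (i) (\emph{$x$ order bounded with respect to $u$}) and its condition (iii) (the Cauchy--Schwarz-type bound $|\omega(b^*xa)|^2\leq \gamma_x\,\omega(a^*ua)\,\omega(b^*ub)$ for all $\omega\in\PA$ and $a,b\in R\A$) is verbatim the equivalence (ii) $\Leftrightarrow$ (iii) of the present statement. Chaining the two closes the cycle.

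The only point requiring care --- and the reason the two halves cannot be merged into a single invocation --- is the bookkeeping of hypotheses: Theorem \ref{bounded_order} consumes (\textsf{r}$_1$) but not (\textsf{P}), while Theorem \ref{thm_end} consumes (\textsf{P}) but not (\textsf{r}$_1$); here both are assumed, together with the pre-unit $u$ and the standing assumption that $x$ has a representative in every $\BB_\alpha$. I expect no genuine obstacle: the substantive work lives in the two cited theorems, and what remains is to recognize \emph{order boundedness with respect to $u$} as the common pivot linking the C*-inductive boundedness of condition (i) to the functional-theoretic characterization of condition (iii).
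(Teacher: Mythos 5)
Your proposal is correct and matches the paper exactly: the authors give no separate argument, stating only that the result is an easy consequence of Theorem \ref{bounded_order} (for (i) $\Leftrightarrow$ (ii)) and Theorem \ref{thm_end} (for (ii) $\Leftrightarrow$ (iii)), which is precisely the chaining you describe, including the correct bookkeeping of which theorem consumes ({\sf r}$_1$) and which consumes ({\sf P}).
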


\end{document}